\DeclareMathOperator{\Hom}{Hom}
\DeclareMathOperator{\Spec}{Spec}
\newcommand{\angles}[1]{\left\langle #1 \right\rangle}
\theoremstyle{definition}
\newtheorem{mydef}{\textbf{Definition}}[section]
\newtheorem{myeg}[mydef]{\textbf{Example}}
\newtheorem{rmk}[mydef]{\textbf{Remark}}
\theoremstyle{plain}
\newtheorem*{nothma}{\textbf{Theorem A}}
\newtheorem*{nothmb}{\textbf{Theorem B}}
\newtheorem{lem}[mydef]{\textbf{Lemma}}
\newtheorem{pro}[mydef]{\textbf{Proposition}}
\newtheorem{cor}[mydef]{\textbf{Corollary}}
\newcommand{\T}{\mathbb{T}}
\newcommand{\TT}{\T}
\newcommand{\multipleaffil}[3]{%
  \address{%
    \begin{minipage}[t]{\textwidth}
      #1 \\
      #2 \\
      #3 
    \vspace{0.1cm}
    \end{minipage}
  }
}
\begin{document}

\title{Representation theory over semifields}
\author{Jaiung Jun}
\multipleaffil{Department of Mathematics, State University of New York at New Paltz, NY, USA}{and}{Institute for Advanced Study, Princeton, NJ, USA}
\email{junj@newpaltz.edu, junj@ias.edu}

\author{Kalina Mincheva}
\address{Department of Mathematics, Tulane University, New Orleans, LA 70118, USA}
\email{kmincheva@tulane.edu}

\author{Jeffrey Tolliver}
\address{}
\email{jeff.tolli@gmail.com}
\makeatletter
\@namedef{subjclassname@2020}{%
	\textup{2020} Mathematics Subject Classification}
\makeatother

\subjclass[2020]{12K10, 14T10, 05B35, 05E10}%\blue{This needs to be updated; so do the keywords}}
\keywords{tropical geometry, matroid, tropical representation, matroidal representation, semifield}
\thanks{}

\begin{abstract}
We study and classify representations of a torsion group $G$ over an idempotent semifield with special attention on the case over the Boolean semifield $\mathbb{B}$. In subsequent work we extend this theory to studying representations of matroids of low rank.

%We also study tropical subrepresentations of the Boolean regular representation $\mathbb{B}[G]$ of a finite group $G$. These are equivalent to the matroids on ground set $G$ for which left-multiplication by each element of $G$ is a matroid automorphism. We completely classify the tropical subrepresentations of $\mathbb{B}[G]$ for rank 3. When $G$ is an abelian group, our approach can be seen as a generalization of Golomb rulers. In doing so, we also introduce an interesting class of matroids obtained from equivalence relations on finite sets. 
\end{abstract}

\maketitle

\tableofcontents

%\vspace*{6pt} % for this guide only.
% A table of contents should normally not be included

\section{Introduction}

In this paper we lay the foundations of representation theory over semifields by developing the necessary module theoretic tools. Our main motivation comes from tropical geometry. Tropical geometry studies algebro-geometric structures via their combinatorial ``shadows'' called tropical varieties. Our work aligns with an emerging approach to study the tropical varieties using their underlying (semiring) algebra as in %\cite{frenk2013tropical}, 
\cite{giansiracusa2016equations}, \cite{giansiracusa2018grassmann}, \cite{maclagan2016tropical}, \cite{maclagan2018tropical}, \cite{lorscheid2015scheme}, \cite{joo2014prime}, \cite{CGM2017}, \cite{LR2017systems}.

The algebraic approach can further strengthen our understanding of the related combinatorial structures. Tropical linear varieties, i.e. tropical linear spaces, can be identified with (valuated) matroids. Thus understanding tropical, and more generally semiring, representations opens the door to understanding representation theory of (valuated) matroids. %\red{representation theory over valuated matroids? I think representations of (valuated) matroids may confuse some people as it may mean the representability question of matroids.} 
It is natural to ask matroids (and their variations) can be seen as certain modules over semirings. 

In this paper we representations of a torsion group $G$ over an idempotent semifield. We also develop some of the technical tools to study and classify tropical (sub)representations of matroids in \cite{JMTmatroidsPart1.2} and reinterpret tropical representations via (weak) automorphism groups of valuated matroids in a companion paper \cite{JMTmatroidsPart2}. In particular, our findings suggest that by developing module theory or representation theory over semifields, one may shed some light on matroids and vice versa. 

% %-----------------------------------------
% Tropical geometry is a recent field of algebraic geometry with the aim to study algebro-geometric structures through associated combinatorial structures. Classically, tropical geometry has focused on the geometry and has not taken full advantage of the underlying semiring algebra until now. Since (valuated) matroids can be identified with tropical linear spaces, it is natural to ask whether the theory of semirings provides an algebraic foundation for matroids. More precisely, one may ask whether or not one can see matroids (and their variations) as certain modules over semirings. 

% There have been various algebraic approaches to matroid theory in recent years.
% Our main sources of motivation are Frenk's thesis \cite{frenk2013tropical} and works inspired by it, such as \cite{giansiracusa2018grassmann} and \cite{crowley2020module}, where the authors view (valuated) matroids as modules of Grassmann algebras over idempotent semifields. Recent work of Baker and Bowler \cite{baker2019matroids} provides a unifying framework for various generalizations of matroids. One may view a module-theoretic approach to matroids as contributing further to the generality of the Baker-Bowler theory. A different categorical approach developed by Nakamura and Reyes \cite{nakamura2023categories} and Beardsley and Nakamura \cite{beardsley2024projective} views (simple) matroids as $\mathbb{F}_1$-modules via their connection to projective geometries.
% %--------------------------

One of the key observations in this paper is that representations of algebraic group schemes over a semifield $K$ are nothing but equivariant vector bundles on $\Spec K$ (Proposition \ref{proposition: representations as equivariant vector bundles}). This correspondence together with our recent results in \cite{jun2024equivariant} on equivariant bundles on toric varieties over semifields, lead to interesting insights about representations. For instance, the fact that any equivariant bundle equivariantly splits, proved in \cite{jun2024equivariant}, translates into the statement that any representation of an irreducible algebraic group over an idempotent semifield decomposes as a sum of one dimensional representations. 

While inspired by the relation to vector bundles, the project has evolved to study a notion of representation theory over semifields as well as the theory of modules over an idempotent semifields. These explorations meet matroid theory through tropical representation theory (and matroidal representation theory) in \cite{JMTmatroidsPart1.2} and \cite{JMTmatroidsPart2}.

\subsection{Summary of results}

%\textcolor{red}{Explan an explanation of equivaraint bundles and representation a bit. }

%Explain: basis lines / correspondence between indecomposable reps and orbits.

Let $V$ be a free module over an idempotent semifield $K$.\footnote{What follows is true even more generally, as $K$ only needs to be a connected zero-sum-free semiring with the condition that the units of $K$, $K^\times$, form a torsion free group - this is guaranteed if $K$ is an idempotent semifield (Lemma \ref{lemma: torsionfree}).} The two most prominent examples of idempotent semifields will be the tropical semifield $\mathbb{T} = \mathbb{R} \cup \{-\infty\}$ with operations maximum and addition of real numbers, and its subfield, the Boolean semifield $\mathbb{B} = \{0, -\infty\}$. 

Fix a basis for $V$. By a \textit{basis line}, we mean a submodule of $V$ spanned by a single basis vector. By appealing to the structure of $\text{GL}_n(K)$ we show that the set of basis lines of $V$ do not depend on the choice of basis (Lemma \ref{lemma: basis lines}). Then we prove the following classification theorem for representations of a torsion group $G$.

\begin{nothma}[Propositions \ref{proposition: representations of finite groups} and \ref{proposition: double cosets}]
Let $G$ be a finite group (or more generally a torsion group) and $K$ an idempotent semifield. 
\begin{enumerate}
    \item 
There is a one-to-one correspondence between isomorphism classes of indecomposable representations of $G$ over $K$ and conjugacy classes of subgroups $H \subseteq G$.
    \item 
There is a one-to-one correspondence between isomorphism classes of representations and isomorphism classes of $G$-sets, sending a representation to its set of basis lines.  
\item 
Let $V$ and $W$ be indecomposable representations of a finite group $G$ over $K$.  Let $H_V$ and $H_W$ be subgroups corresponding to $V$ and $W$ as above. Then the set of homomorphisms $\phi: V\rightarrow W$ is in bijective correspondence with the set of all functions $H_V \backslash G / H_W \rightarrow K$.  Here $H_V \backslash G / H_W$ denotes the set of double cosets of the form $H_V g H_W$, where $g\in G$.
\end{enumerate}
In the case $K = \mathbb{B}$, (1) and (2) hold without any assumption on $G$.
\end{nothma}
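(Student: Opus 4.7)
The plan is to reduce the representation theory of $G$ over $K$ to the theory of $G$-sets, exploiting two structural facts: the set of basis lines $\mathcal{L}(V)$ of a free $K$-module $V$ is intrinsic (the cited lemma on basis lines), and every element of $\text{GL}_n(K)$ is a monomial matrix, hence permutes $\mathcal{L}(V)$. Consequently any representation $\rho\colon G \to \text{GL}(V)$ induces a $G$-action on the set $\mathcal{L}(V)$. For a line $L \in \mathcal{L}(V)$, its stabilizer $H_L \le G$ acts on $L \cong K$ through a character $\chi\colon H_L \to K^\times$. When $G$ is torsion, $H_L$ is torsion; since $K^\times$ is torsion-free by the referenced lemma, $\chi$ must be trivial. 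In the case $K = \mathbb{B}$, $K^\times = \{1\}$ already, so the same conclusion holds without any hypothesis on $G$.

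For part (2), I would define a functor $V \mapsto \mathcal{L}(V)$ from representations of $G$ over $K$ to $G$-sets, with inverse sending a $G$-set $S$ to $V_S := \bigoplus_{s \in S} K \cdot e_s$ with $G$ permuting basis vectors. The triviality of the stabilizer characters gives $\mathcal{L}(V_S) \cong S$ and $V_{\mathcal{L}(V)} \cong V$; the second isomorphism is produced by choosing a generator of one line in each $G$-orbit and propagating the choice by the $G$-action, which is well-defined precisely because the stabilizer acts trivially on the chosen generator. For part (1), indecomposability of $V$ corresponds to transitivity of the $G$-set $\mathcal{L}(V)$, and transitive $G$-sets are classified up to isomorphism by conjugacy classes of subgroups via $G/H$.

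For part (3), fix identifications $V \cong V_{G/H_V}$ and $W \cong V_{G/H_W}$. Any $G$-equivariant $K$-linear map $\phi\colon V \to W$ is determined by $\phi(e_{H_V}) \in W$, which by equivariance under $H_V$ must lie in $W^{H_V}$. Writing $\phi(e_{H_V}) = \sum_{gH_W} c_{gH_W}\, e_{gH_W}$, the $H_V$-invariance condition says that $c_{gH_W}$ depends only on the $H_V$-orbit of $gH_W$ in $G/H_W$; these orbits are exactly the double cosets $H_V \backslash G / H_W$. Hence $\Hom(V,W)$ is in bijection with functions $H_V \backslash G / H_W \to K$. The main obstacle is ensuring the monomial-matrix structure of $\text{GL}_n(K)$ is really what carries the argument: one must justify that the isomorphism $V \cong V_{\mathcal{L}(V)}$ can be made $G$-equivariant (which requires the stabilizer characters to vanish) and, for the $\mathbb{B}$-case, that the classification genuinely drops the torsion hypothesis because $\mathbb{B}^\times$ is trivial. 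Once this is set up, the three statements follow from the corresponding facts for $G$-sets.
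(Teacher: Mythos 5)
Your proposal is correct and follows essentially the same route as the paper: identify the $G$-action on basis lines, kill the stabilizer character using torsion-freeness of $K^\times$ (or triviality of $\mathbb{B}^\times$), match indecomposables with transitive $G$-sets $G/H$, and classify homomorphisms by the $H_V$-invariant elements of $W$, whose coefficients are exactly functions on the double cosets. The only cosmetic difference is that you present $V$ directly as the permutation module on $G/H_V$ rather than as a quotient of the regular representation $K[G]$, but the content is identical.
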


Our next result is about $\mathbb{B}[G]$-modules for finite groups. One key notion that we explore is ``\emph{quasi-freeness}''. For a finitely generated $\mathbb{B}$-module, quasi-freeness is equivalent to the corresponding lattice being atomic, and it may serve as an important definition in developing linear algebra over semirings.\footnote{In our companion paper \cite{JMTmatroidsPart2}, we study the notions of ``weakly-free'' and ``quasi-free'' intensively to develop valuated matroidal representation theory. In \cite{JMTmatroidsPart2}, we also prove the equivalence between quasi-freeness and atomic lattice.} %This notion provides one definition of rank of a matrix with entries in an idempotent semifield.\footnote{There are several inequivalent definitions for rank of matrices whose entries in an idempotent semifield.}\blue{  I don't understand the connection with the notion of rank.} 
    %\red{In fact, quasi-freeness may serve as an important definition in developing linear algebra over semirings.\footnote{In our companion paper \cite{JMTmatroidsPart2}, we study the notions of ``weakly-free'' and ``quasi-free'' intensively to develop valuated matroidal representation theory.} }

%\blue{If we prefer, we can avoid mentioning quasi-free modules in the introduction by making use of the fact that a $\mathbb{B}$-module is quasi-free if and only if the corresponding lattice is atomic.}
%and hence it should be applied to matroidal representations in \cite{giansiracusa2020matroidal} and \cite{marcus2024tropical}. \blue{I'm not sure whether it says anything interesting about matroids; $Q_M$ is always quasi-free and it's not clear when $V_M$ should be cyclic}
%Many equivalent definitions over rings bifurcate over semirings. So, over semirings, there are two viable notions (weakly free and quasi-free) that we can utilize for linear algebra over semirings.\footnote{In our companion paper \cite{JMTmatroidsPart2}, we study these two notions intensively to develop valuated matroidal representation theory.}\blue{We probably need to change the phrasing to account for the fact that free doesn't imply weakly free (even for rings).} In Section \ref{section: B-modules}, we explore $\mathbb{B}[G]$-modules and in particular we prove the following. 

\begin{nothmb}[Proposition~\ref{prop:BGcyclicBqf}]
    Let $G$ be a finite group.  Let $M$ be a nonzero cyclic $\mathbb{B}[G]$-module.  Then $M$ is a quasi-free $\mathbb{B}$-module (as in Definition \ref{definition: quasi-free}).
\end{nothmb}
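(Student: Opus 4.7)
The plan is to exhibit every nonzero element of $M$ as a sum of atoms and then invoke the equivalence (from the footnote following Theorem B) between atomicity of the underlying $\vee$-semilattice and quasi-freeness for finitely generated $\mathbb{B}$-modules. Fix a cyclic generator $v \in M$, so that $M = \mathbb{B}[G]\cdot v$. Since $\mathbb{B}[G]$ may be identified with the power set of $G$ under union, every element of $M$ takes the form $\sum_{g \in S} gv$ for some $S \subseteq G$; in particular $M$ is finite. Equip $M$ with the semilattice order $a \le b \iff a+b = b$; it then suffices to show that each $gv$ is either $0$ or an atom of this order.

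The crux of the argument is the following observation: if $h \in G$ satisfies $hv \le v$, then in fact $hv = v$. The $G$-action is order-preserving because $a+b=b$ implies $ha + hb = h(a+b) = hb$, so applying $h$ repeatedly to $hv \le v$ yields a descending chain
\[
v \ \ge\ hv \ \ge\ h^2 v \ \ge\ \cdots\ \ge\ h^n v,
\]
where $n$ is the order of $h$ in the finite group $G$. Since $h^n = e$ forces $h^n v = v$, the chain must collapse and therefore $hv = v$.

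From this, atomicity of $v$ follows at once: if $x \le v$ and $x = \sum_{h \in S} hv$, then each summand satisfies $hv \le x \le v$ (the first inequality by $\mathbb{B}$-idempotency $hv + x = x$), so $hv = v$ for every $h \in S$ by the observation; hence $x = v$ or $x = 0$. Translating by the invertible $G$-action shows every $gv$ is either $0$ or an atom, and since every element of $M$ is visibly a sum of such $gv$'s, the lattice of $M$ is atomic and $M$ is quasi-free. The one genuinely substantive step is the finite-order collapse argument for $hv \le v$; no serious obstacle stands in the way once the order-preservation of the $G$-action is noted, and the only external input is the cited characterization of quasi-freeness by atomicity.
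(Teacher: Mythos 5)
Your proof is correct, and its engine is exactly the paper's: the finite-order collapse argument showing that $hv \le v$ forces $hv = v$ is precisely the content of Lemma~\ref{lemma: anti-chain lemma} (the anti-chain lemma), and the generating set you produce, $\{gv \mid g \in G\}$, coincides with the paper's $\{(gH)v \mid gH \in G/H\}$ where $H$ is the stabilizer of $v$. Where you diverge is in the last step: you show each $gv$ is an atom, conclude that every element of $M$ is a join of atoms, and then invoke the footnoted equivalence between this property and quasi-freeness --- an equivalence the paper only asserts and defers to the companion paper \cite{JMTmatroidsPart2}. The paper instead verifies quasi-independence directly: given $(gH)v = \sum_{kH} a_{kH}(kH)v$ with $a_{kH}\neq 0$, it deduces $kv \le gv$, hence $g^{-1}kv \le v$, hence $kH = gH$, so the coefficients are forced to be $\delta_{gH,kH}$. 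This keeps the proof self-contained, whereas your route leans on an external characterization; the gap is harmless, since passing from ``every element is a join of atoms'' to ``the atoms form a quasi-basis'' is a two-line check (if $a_i = \sum_j c_j a_j$ with the $a_j$ distinct atoms and $c_j \in \mathbb{B}$, then $c_j = 1$ forces $a_j \le a_i$, hence $a_j = a_i$), but if you want to cite only results proved in this paper you should either include that check or verify quasi-independence directly as the paper does. One further cosmetic point: what you establish and what quasi-freeness needs is that every element is a \emph{join of atoms} (atomistic), which is strictly stronger than ``every nonzero element dominates an atom''; your argument does deliver the stronger statement, so nothing is lost.
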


\bigskip

\textbf{Acknowledgment} J.J. acknowledges AMS-Simons Research Enhancement Grant for Primarily Undergraduate Institution (PUI) Faculty during the writing of this paper, and parts of this research was done during his visit to the Institute for Advanced Study supported by the Bell System Fellowship Fund. K.M. acknowledges the support of the Simons Foundation, Travel Support for Mathematicians.

\section{Preliminaries}\label{section: preliminaries}

%\orange{define algebraic group over a semifield?}
%\jaiung{Here we list the results that we need from \cite{jun2024equivariant} and \cite{JMT20}. We also should find some connections and/or applications. Any connection to \cite{giansiracusa2020matroidal} would be nice.}

A \textit{semiring} is a set $R$ with two binary operations (addition $+$ and multiplication $\cdot$ ) satisfying the same axioms as rings, except the existence of additive inverses. In this paper, a semiring is always assumed to be commutative. A semiring $(R,+,\cdot)$ is \emph{semifield} if $(R\backslash\{0_R\},\cdot)$ is a group. A semiring $R$ is said to be \emph{zero-sum free} if $a+b=0$ implies $a=b=0$ for all $a,b \in R$. 

We will denote by $\mathbb{B}$ the semifield with two elements $\{1,0\}$, where $1$ is the multiplicative identity, $0$ is the additive identity and $1+1 = 1$. 
The {\it tropical semifield}, denoted $\mathbb{T}$, is the set $\mathbb{R}  \cup \{-\infty\} $ with the $+$ operation to be the maximum and the $\cdot$ operation to be the usual addition, with $-\infty = 0_\mathbb{T}$. 

Modules over semirings are defined analogously to modules over rings, but there is a big difference in how they behave. 

\begin{mydef}
An \emph{affine semiring scheme} is the prime spectrum $X=\Spec R$ of some semiring $R$, equipped with a structure sheaf $\mathcal{O}_X$. A locally semiringed space is a topological space with a sheaf of semirings such that the stalk at each point has a unique maximal ideal. A \emph{semiring scheme} is a locally semiringed space which is locally isomorphic to an affine semiring scheme.
\end{mydef}

A semiring $R$ is said to be connected if $\Spec R$ is connected. This is equivalent to $R$ having only trivial idempotent pairs. See \cite[Proposition 3.10]{JMT20}. In particular, any semifield is connected.

\begin{rmk}
In \cite{giansiracusa2016equations} J. Giansiracusa and N. Giansiracusa propose a special case of the semiring schemes called tropical schemes in such as way that the tropicalization of an algebraic variety can be understood as a set of $\TT$-rational points of a tropical scheme. 
\end{rmk}

\subsection{Equivariant vector bundles over semirings}

% \begin{pro}[Proposition 3.18 \cite{JMT20}]\label{proposition: exact sequence R, GL, S_n}
% Let $R$ be a zero-sum free semiring. If $R$ has only trivial idempotent pairs\footnote{This condition holds if $R$ is a semifield.}, then one has the following split short exact sequence of groups which is natural in $R$:
% \begin{equation}\label{eq: exact sequence R,GL, S_n}
% \begin{tikzcd}
% 0 \arrow[r] & (R^\times)^n \arrow[r,"f"]&
% 	\emph{GL}_n(R) \arrow[r,"g"]
% & S_n\arrow[r] & 0
% \end{tikzcd}
% \end{equation}
% where $f$ is the diagonal map and $g$ sends a matrix $A$ to the unique permutation $\sigma$ such that $A_{\sigma(i)i}\neq 0$ for all $i$.    
% \end{pro}

In what follows, we will use the terms ``semiring scheme'' and ``scheme over a semiring'' interchangeably.

\begin{mydef} 
Let $X$ be a scheme over a semiring $R$. By a \emph{vector bundle} on $X$, we mean a locally free sheaf on $X$. 
%an $\mathcal{O}_X$-module $\mathcal{F}$ such that for each $x \in X$, there exists an open neighborhood $U$ of $x$ and a finite set $I$ \footnote{In this paper, we only consider the case when the rank is finite.} such that
%\[
%\mathcal{F}|_{U} \simeq \bigoplus_{i \in I} \mathcal{O}_X|_U
%\]
%as an $\mathcal{O}_X|_U$-module. %\blue{  I think there's also a condition on linearity of the transition maps.}
% A vector bundle $\mathcal{F}$ on $X$ is said to be \emph{rank $n$} if $|I|=n$ for each open subset $U$. 
\end{mydef}

We show in \cite{jun2024equivariant} that the functor of points of a locally free sheaf on a scheme $X$ over a semiring $R$ is the same as the functor of points of a geometric vector bundle on a scheme $X$ over a semiring $R$. See \cite{jun2024equivariant} for the precise definition of the functor of points of a locally free sheaf.

\begin{mydef} \label{definition: equivariant vector bundle}
Let $X$ be a scheme over a semiring $R$ and $G$ be a group scheme over $R$ acting on $X$. We define an \textit{equivariant vector bundle} to be a vector bundle $E$ on $X$ together with an action of $G(A)$ on $E(A)$ for each $R$-algebra $A$ satisfying the following:
\begin{enumerate}
    \item
the action is natural in $A$. \label{item:naturality of action on bundle}
\item 
the action makes $\pi_A$ equivariant. 
\item
the action makes the induced map $E_x \to E_{gx}$ $A$-linear.
\end{enumerate}
\end{mydef}

\begin{pro} \cite[Theorem 5.17]{jun2024equivariant} \label{proposition: split theorem}
Let $X$ be an irreducible scheme over an idempotent semifield $K$ and $G$ be an irreducible algebraic group over K acting on X. Let E be a G-equivariant vector bundle on X
which is trivial as a vector bundle. Then E is a direct sum of equivariant line bundles.   
\end{pro}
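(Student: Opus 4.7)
The plan is to exploit the trivialization of $E$ together with the rigid structure of $GL_n$ over idempotent semirings, in order to reduce the equivariant structure to a ``permutation plus scaling'' action, and then to kill the permutation part using the irreducibility of $G$ and $X$.

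First, I would fix an isomorphism $E \cong \mathcal{O}_X^n$ of (non-equivariant) vector bundles. This produces canonical coordinate line sub-bundles $L_1, \ldots, L_n$ with $E = L_1 \oplus \cdots \oplus L_n$. Encoding the $G$-action on $E$ through this trivialization yields a morphism of schemes $\rho : G \times X \to GL_n$ over $K$ (fiberwise, sending $(g,x)$ to the automorphism $E_x \to E_{gx} \cong E_x$ induced by $g$ and the fixed trivialization), where $GL_n$ is the group scheme of invertible $n \times n$ matrices over $K$.

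The decisive input is a globalization of Lemma \ref{lemma: basis lines}: over the structure sheaf of an irreducible scheme over an idempotent semifield, every invertible $n \times n$ matrix is monomial, so $GL_n \cong \mathbb{G}_m^n \rtimes S_n$ as group schemes. This is where the main technical effort lies. I would establish it by passing to affine opens and verifying that irreducibility of $X$ forces the algebras of sections $\mathcal{O}_X(U)$ to remain connected, zero-sum free, and to have torsion-free unit groups, conditions under which the argument underlying Lemma \ref{lemma: basis lines} should carry over essentially verbatim.

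Granting this, composing $\rho$ with the quotient $\mathbb{G}_m^n \rtimes S_n \twoheadrightarrow S_n$ yields a morphism $\bar\rho : G \times X \to S_n$ from an irreducible scheme to a finite discrete scheme. Any such morphism is necessarily constant, and since $\bar\rho$ sends the identity section to the identity permutation, $\bar\rho$ must be trivial. Hence $\rho$ factors through the diagonal torus $\mathbb{G}_m^n$, so the $G$-action preserves each $L_i$ individually, exhibiting $E = L_1 \oplus \cdots \oplus L_n$ as a direct sum of $G$-equivariant line bundles. The main obstacle, to reiterate, is the monomial-matrix statement for $GL_n$ over the relevant sheaves of sections (rather than over $K$ itself); once this is in hand, the remainder of the argument is essentially formal.
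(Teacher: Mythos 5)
First, a point of order: the paper does not actually prove this proposition. It is imported verbatim from \cite[Theorem 5.17]{jun2024equivariant} and used as a black box (its only role here is to feed Proposition \ref{proposition: split}). So there is no in-paper proof to compare against; what you have written is an attempt to reprove the cited external theorem.

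As a sketch, your outline follows what is almost certainly the intended route: trivialize $E$, encode the action as a morphism $G\times X\to \mathrm{GL}_n$, invoke the fact that over a connected zero-sum-free semiring every invertible matrix is monomial (the same input as Lemma \ref{lemma: basis lines}, via \cite[Proposition 3.15]{JMT20}), and kill the permutation part by a connectedness argument. But the two load-bearing steps are not established, and one of them is not even flagged. (i) You assert $\mathrm{GL}_n\cong \mathbb{G}_m^n\rtimes S_n$ as group schemes. The pointwise statement over connected test algebras does not immediately give this: for a disconnected $K$-algebra $A$, an element of $\mathrm{GL}_n(A)$ has a permutation part that is only locally constant on $\Spec A$, so $S_n$ must be taken as the constant group scheme and the functorial isomorphism has to be checked. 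You do flag this as the main technical effort, which is fair. (ii) More seriously, you silently use that $G\times X$ is irreducible, or at least connected, in order to conclude that $\bar\rho\colon G\times X\to S_n$ is constant. Products of irreducible schemes need not be irreducible in general --- even over fields one needs geometric irreducibility --- and over an idempotent semifield the connectedness of $G\times_K X$ amounts to a nontrivial claim about tensor products of semirings having no nontrivial idempotent pairs. Without it the permutation component could jump between components and the argument collapses; establishing exactly this kind of fact is part of what \cite{jun2024equivariant} does. So the proposal is a reasonable outline, but the hard content of the theorem sits precisely in the two steps you defer or omit.
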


The following is a generalization of \cite[Proposition 5.6]{jun2024equivariant}. Note that in \cite[Proposition 5.6]{jun2024equivariant}, it is assumed that the group $G$ is commutative and the semifield $K$ is idempotent, however, one can easily observe that the same proof holds even when $G$ is not commutative and $K$ is zero-sum-free. 

\begin{pro}\label{proposition: K[G]-modle}
Let $K$ be a zero-sum-free semifield and $G$ be a group (not necessarily commutative), then the group semiring $K[G]$ does not have nontrivial zero-divisors. %\textcolor{red}{I think $K$ can be just zero-sum-free by a similar argument as in \cite[Proposition 5.6]{jun2024equivariant}.}
\end{pro}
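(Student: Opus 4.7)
The plan is to argue contrapositively: I will take nonzero elements $x = \sum_{g \in G} a_g g$ and $y = \sum_{h \in G} b_h h$ of $K[G]$ and show that $xy \neq 0$. The argument will lean on two elementary facts about $K$, both of which I would establish at the top of the proof. The first is that in any zero-sum-free semiring a finite sum vanishes only if every summand vanishes; a quick induction from the two-term axiom $a+b = 0 \Rightarrow a = b = 0$ delivers this. The second is that $K$ has no nontrivial zero divisors, which is immediate since the nonzero elements of a semifield form a multiplicative group.

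With these in hand, the body of the proof is essentially a pointwise computation. For an arbitrary $k \in G$, the $k$-coefficient of $xy$ is $\sum_{gh = k} a_g b_h$. Assuming $xy = 0$, the first fact forces every such $a_g b_h$ to be $0$, and then the second fact forces $a_g = 0$ or $b_h = 0$ for each pair $(g,h)$ with $gh = k$. Since every ordered pair in $G \times G$ lies in the fiber over some $k = gh$, the condition ``$a_g = 0$ or $b_h = 0$'' in fact holds for all $(g,h) \in G \times G$, or equivalently $\mathrm{supp}(x) \times \mathrm{supp}(y) = \emptyset$, contradicting $x, y \neq 0$.

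I do not expect a real obstacle here. In particular, the noncommutativity of $G$ never enters the argument, because the conclusion is extracted pointwise on $G \times G$ rather than via any combinatorial or order-theoretic structure on $G$; this is precisely why the zero-sum-free hypothesis short-circuits the subtler conditions (orderability, unique-product property, and so on) that are needed to prove the analogous statement for group rings of torsion-free groups over a field. The only step that warrants a little care in writing is the induction extending the two-term axiom to arbitrary finite sums, which is standard but merits an explicit line so that the coefficient-wise reduction is fully justified.
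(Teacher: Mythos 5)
Your argument is correct and is exactly the coefficientwise computation the authors have in mind: the paper gives no written proof here, deferring instead to \cite[Proposition 5.6]{jun2024equivariant} with the remark that the same argument works when $G$ is noncommutative and $K$ is zero-sum-free, and the two facts you isolate (a finite sum in a zero-sum-free semiring vanishes only if each summand does, and a semifield has no zero divisors) are precisely what make that transfer work. The only cosmetic simplification available is to argue directly rather than contrapositively: pick $g_0\in\mathrm{supp}(x)$, $h_0\in\mathrm{supp}(y)$ and note that the coefficient of $g_0h_0$ in $xy$ contains the nonzero summand $a_{g_0}b_{h_0}$, hence is nonzero by zero-sum-freeness.
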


%\orange{I think the following section can be put in the intro instead and we can recall the definition somewhere later in chapter 5?}

%\jaiung{We recall tropical linear space here as well as matroids to explain the last section. } \orange{-- we no longer use the term trop. linear spaces here just val matroids.}

% Let $S$ be a set of linear polynomials over $\T$. The \teeL $L$ defined by $S$ is the set of points in $\T^n$ such that all polynomials in $S$ tropically vanish on $L$, i.e., each polynomial in $S$ attains its maximum at least twice at every point on $L$. This is a tropical pre-variety. 

% A \tropL is a \teeL such that $S$ arises from the set of circuits of a valuated matroid over $\T$. A \tropL is an element in the Dressian, in particular, is a tropical variety. 

%\blue{Somewhere in here we should explain the matroidal representations of \cite{giansiracusa2020matroidal}.  %We also should explain our convention that the automorphism group $\mathrm{Aut}(V)$ of a tropical linear subspace $V\subseteq K^n$ is the set of automorphisms $\phi$ of $K^n$ such that $\phi$ and $\phi^{-1}$ preserve $V$, not the set of automorphisms of $V$ as a module.  This definition ensures that matroidal representations correspond to homomorphisms $G\rightarrow\mathrm{Aut}(V)$}

\section{Basics of representation theory over semifields}

In this section, we classify group representations over idempotent semifields.  We consider both the case of algebraic groups and of abstract groups. When the context is clear, we will often use ``groups'' instead of ``abstract groups''.

\subsection{Representations of algebraic groups}

First recall that $\text{GL}_n$ is an algebraic group over $\mathbb{N}$ (and hence over any semiring), represented by the following semiring:
\[
A_n=\mathbb{N}[x_{ij},y_{ij}\mid 1\leq i,j, \leq n]/\sim,
\]
where $\delta_{ij}$ denotes the Kronecker delta function and $\sim$ is the congruence generated by the following:
\[
\angles{\sum_k x_{ik}y_{kj}=\delta_{ij},~~\sum_k y_{ik}x_{kj}=\delta_{ij}}
\]

We may now define representations of groups over semirings.

\begin{mydef}Let $G$ be an algebraic group over a semiring $K$.  An $n$-dimensional \emph{representation} of $G$ over $K$ is defined as a homomorphism $G\rightarrow \text{GL}_n$ of group schemes.  If instead $G$ is an abstract group, an $n$-dimensional representation of $G$ over $K$ is defined as a homomorphism $G\rightarrow \text{GL}_n(K)$ of groups.
\end{mydef}

It is clear that a group homomorphism $G\rightarrow \text{GL}_n(K)$ is equivalent to a linear action map $G \times K^n \rightarrow K^n$.  As is standard, we will often abuse notation by saying that $K^n$ is the representation.

%\blue{This is the old definition.}
%{\color{gray}
%To motivate the definition of a representation of an algebraic group, we observe that if $K$ is a semifield, then a vector bundle $E$ over $\Spec K$ must be trivial (any open cover contains all of $\Spec K$, so this is true for a trivializing cover).  $E$ then corresponds to a finite-dimensional free $K$-module, and it is easily seen this module is the set $E(K)$ of $K$-valued points of $E$ (here we work with schemes over $K$, so $\Spec K$ has exactly one $K$-valued point).

%\begin{mydef}Let $K$ be an idempotent semifield.  If $G$ is an algebraic group over $K$ then a \emph{representation $V$ of $G$} is defined to be an equivariant vector bundle over $\Spec K$.  In this case, we will sometimes abuse notation by using $V$ to denote the $K$-module \orange{corresponding to V or something..} of $K$-valued points $V(K)$.  
%If we instead let $G$ be a group, a representation of $G$ is defined to be a finite-dimensional free $K$-module $V$ together with an action of $G$ on $V$ through module automorphisms.
%\end{mydef}
%}

We can also describe representations of algebraic groups as equivariant vector bundles over a point.  Note that vector bundles over $\Spec K$ for a semifield $K$ are the same as free modules.

\begin{pro}\label{proposition: representations as equivariant vector bundles}
Let $K$ be a semifield.  Let $G$ be an algebraic group over $\Spec K$.  Let $E$ be the vector bundle on $\Spec K$ corresponding to $K^n$.
\begin{enumerate}
    \item 
There is a one-to-one correspondence between natural transformations $\alpha: G\times E \rightarrow E$ which make $E$ into an equivariant vector bundle, where $G$ and $E$ are viewed as functors, and homomorphisms $G\rightarrow \text{GL}_n$.  
\item 
Furthermore, two homomorphisms $G\rightarrow \text{GL}_n$ are related by an inner automorphism of $\text{GL}_n$ if and only if the corresponding equivariant vector bundles are isomorphic.
\end{enumerate}
\end{pro}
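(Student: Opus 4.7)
The plan is to unwind the definition of an equivariant vector bundle (Definition \ref{definition: equivariant vector bundle}) in the degenerate case $X=\Spec K$ and observe that all of the data collapses to precisely a natural transformation $G\to \text{GL}_n$ of group-valued functors, which by the Yoneda lemma is the same thing as a homomorphism of group schemes. For part (2), once this equivalence is set up, an isomorphism of equivariant structures on the fixed underlying bundle $E$ amounts to an element of $\text{GL}_n(K)$ that intertwines the two $G$-actions, which is exactly conjugation by a $K$-point of $\text{GL}_n$.

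For (1), I would first recall that the functor of points of $E$ sends a $K$-algebra $A$ to $E(A)=A^n$. Since $X=\Spec K$ is a single point, the structure map $\pi$ is trivial, so condition (2) of Definition \ref{definition: equivariant vector bundle} is vacuous and condition (3) reduces to requiring that $G(A)$ act on $A^n$ by $A$-linear maps. Thus an equivariant structure on $E$ is precisely a family of $A$-linear group actions $G(A)\times A^n \to A^n$ that is natural in $A$. Such an action is the same data as a group homomorphism $G(A)\to \Aut_A(A^n)=\text{GL}_n(A)$ natural in $A$, i.e., a natural transformation $G\to \text{GL}_n$ of group-valued functors on $K$-algebras. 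By Yoneda applied to the representable functor $\text{GL}_n$, this is equivalent to a homomorphism of group schemes $G\to \text{GL}_n$. The two directions of the correspondence are tautologically inverse to one another because they are defined by the same formula on $A$-points.

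For (2), let $\phi_1,\phi_2:G\to \text{GL}_n$ be two homomorphisms and let $E_1$, $E_2$ denote the corresponding equivariant structures on $E$ coming from (1). An isomorphism $E_1\to E_2$ of equivariant vector bundles is, in particular, an automorphism of $E$ as a bare vector bundle on $\Spec K$, hence an element $g\in \text{GL}_n(K)$. Compatibility with the group actions says that for every $K$-algebra $A$ and every $x\in G(A)$, the $A$-linear map $g_A:A^n\to A^n$ obtained by base change satisfies $g_A\cdot \phi_1(x)=\phi_2(x)\cdot g_A$, or equivalently $\phi_2(x)=g_A\phi_1(x)g_A^{-1}$. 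By naturality in $A$ this says exactly that $\phi_2$ is the composition of $\phi_1$ with conjugation by $g$, so $\phi_1$ and $\phi_2$ differ by an inner automorphism of $\text{GL}_n$. The converse is obtained by reading the same calculation backwards.

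The argument is essentially a bookkeeping exercise in the functor-of-points formalism, and I do not anticipate a serious obstacle: everything reduces to the Yoneda lemma and to the fact that $\text{GL}_n$ represents the functor of $A$-linear automorphisms of the free module of rank $n$. The only subtle point is to interpret \emph{inner automorphism of $\text{GL}_n$} correctly, namely as conjugation by a $K$-point of $\text{GL}_n$, which is the natural reading in the group-scheme setting.
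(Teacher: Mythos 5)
Your proposal is correct and follows essentially the same route as the paper: unwind Definition \ref{definition: equivariant vector bundle} over the one-point base $\Spec K$, identify fiberwise-linear actions on $A^n$ with homomorphisms $G(A)\to\text{GL}_n(A)$, and match naturality on both sides, then realize an isomorphism of equivariant structures as conjugation by an element of $\text{GL}_n(K)$. The only difference is one of detail: the paper carries out the check that naturality of $\alpha_A$ is equivalent to naturality of $\chi_A$ by an explicit computation on basis vectors, which you compress into the assertion that the two kinds of naturality coincide (and into an appeal to Yoneda), but this is the same routine verification.
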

\begin{proof}
Note that for a $K$-algebra $A$, the set $(\Spec K)(A)$ is always a one point set; we will denote its element by $p_A$.  So $E(A) = E_{p_A}$ for all $A$.  By the definition of $E$, $E_{p_A}$ may be identified with $A^n$. Hence, to give $E$ the structure of an equivariant vector bundle amounts to specifying a natural transformation
\[
\alpha_A:G(A) \times E(A) \to E(A), \quad \textrm{equivalently,} \quad \alpha_A:G(A) \times A^n \to A^n,
\]
which is a group action and is linear on the single fiber $E_{p_A} = A^n$.  Of course, linear group actions on $A^n$ correspond to homomorphisms $\chi_A:G(A) \rightarrow \text{GL}_n(A)$.  So for the first claim of the proposition, all we need to do is prove that a homomorphism
\[
\chi_A: G(A)\rightarrow \text{GL}_n(A)
\]
is natural if and only if the corresponding map 
\[
\alpha_A: G(A)\times A^n \rightarrow A^n
\]
is natural.

Let $\phi: A\rightarrow B$ be a homomorphism and $\phi_*$ be the induced maps $\Spec B \to \Spec A$, $G(A) \to G(B)$, and $\text{GL}_n(A) \to \text{GL}_n(B)$. Let $\phi^n:A^n \to B^n$ be the product of $n$-copies of $\phi$. Let $e_1,\ldots, e_n$ denote the standard basis (both in $A^n$ and $B^n$).  Let $x = \sum_i x_i e_i$ be an arbitrary element of $A^n$ and $g$ be an arbitrary element of $G(A)$. For $\alpha$ to be natural means for any $g$ and $x$,
\begin{equation}
\phi^n(\alpha_A(g, x)) = \alpha_B(\phi_*(g), \phi^n(x)). 
\end{equation}
Rewriting both the left and right side in terms of $\chi$ shows this is equivalent to (with matrix notation)
%\phi^n (\sum_j \chi_A(g)_{ij} x_j) = \sum_j \chi_B(\phi_*(g))_{ij} \phi^n(x_j)
\begin{equation}
\phi^n (\chi_A(g) x) = \chi_B(\phi_*(g)) \phi^n(x)
\end{equation}
and further simplifying the left side shows this is equivalent to (with matrix notation)
%\sum_j \phi_* (\chi_A(g)_{ij}) \phi^n (x_j) = \sum_j \chi_B(\phi_*(g))_{ij} \phi^n(x_j)
\begin{equation}\label{eq: inner2}
\phi_* (\chi_A(g)) \phi^n (x) = \chi_B(\phi_*(g)) \phi^n(x).
\end{equation}
%\blue{I don't think the indexing in the previous two equations is correct.  I think we can just drop the indices and stick with matrix notation, i.e. $\phi^n (\chi_A(g) x) = \chi_B(\phi_*(g)) \phi^n(x)$ and $\phi_* (\chi_A(g)) \phi^n (x) = \chi_B(\phi_*(g)) \phi^n(x)$.}
By linearity, for this to hold for all $x$ is equivalent to it holding for each $x = e_k$.  So $\alpha$ is natural if and only if for each $g$ and each $i, k$,
\begin{equation}
\phi_* (\chi_A(g)_{ik}) = \chi_B(\phi_*(g))_{ik}
\end{equation}
which is equivalent to $\chi$ being natural.

For the second claim, suppose that $\phi,\psi:G \to \text{GL}_n$ and $h \in \text{GL}_n$ such that $\phi$ and $\psi$ are related by the inner automorphism determined by $h$, i.e.
\[
\phi(g) = h \circ \psi(g) \circ h^{-1}, \forall g\in G.
\]
Let $\alpha$ and $\beta$ be the corresponding equivariant vector bundle structures on $E$, respectively. For each $K$-algebra $A$, we have $h_A\in\text{GL}_n(A)$. From the above correspondence, for $g \in G(A)$, and $v \in A^n$, we have 
\begin{equation}\label{eq: inner}
\alpha_A(g,v) = h_A\beta_A(g,h_A^{-1}v).
\end{equation}
Let $E_\alpha$ be the vector bundle $E$ with the equivariant structure $\alpha$, and likewise $E_\beta$. Consider
\[
f_A:E_\beta(A)=A^n \to E_\alpha(A)=A^n, \quad v \mapsto h_A v.
\]
We claim that $f_A$ is natural in $A$. In fact, let $\varphi:A \to B$ be a morphism of $K$-algebras. We have to show that the following diagram commutes:
\[
\begin{tikzcd}
E_\beta(A)=A^n \arrow[r,"f_A"] \arrow[d," \varphi^n",swap] & E_\alpha(A)=A^n \arrow[d,"\varphi^n"] \\
E_\beta(B)=B^n  \arrow[r,"f_B"] & E_\alpha(B)=B^n
\end{tikzcd}
\]
But, since $\varphi_*(h_A)=h_B$, we have
\[
\varphi^n(f_A(v))=\varphi^n(h_Av) = h_B\varphi^n(v) = f_B(\varphi^n(v)).
\]
Hence $f:E_\beta \to E_\alpha$ is an isomorphism of equivariant vector bundles. Conversely, let $\alpha,\beta$ be two equivariant structures on $E$. We have an isomorphism $\gamma:E_\beta \to E_\alpha$ making the following diagram commute for each $K$-algebra $A$:
\[
\begin{tikzcd}
G(A) \times E_\beta(A) \arrow[r,"\beta_A"] \arrow[d," \text{id} \times \gamma_A",swap] & E_\beta(A) \arrow[d,"\gamma_A"] \\
G(A) \times E_\alpha(A) \arrow[r,"\alpha_A"] & E_\alpha(A)
\end{tikzcd}
\]
In terms of the corresponding characters, we have the following commutative diagram:
\[
\begin{tikzcd}
G(A)  \arrow[r,"\chi_{\beta}(A)"] \arrow[d," \text{id}",swap] & \text{GL}_n(A) \arrow[d,"\eta_A"] \\
G(A)  \arrow[r,"\chi_{\alpha}(A)"] & \text{GL}_n(A) 
\end{tikzcd}
\]
Here $\eta_A$ is the inner automorphism defined via conjugation by $\gamma_A\in GL_n(A)$.  It follows that $\chi_\alpha$ and $\chi_\beta$ are related by an inner automorphism of $\text{GL}_n$. 
\end{proof}

In the case where $G$ is irreducible, we can classify representations as follows.

\begin{pro}\label{proposition: split}
Let $G$ be an irreducible algebraic group over an idempotent semifield $K$.  Then every representation of $G$ decomposes as a sum of one dimensional representations.  %Moreover there is a one-to-one correspondence between one dimensional representations and homomorphisms $G \rightarrow GL_1$ of algebraic groups.
%\blue{TODO: we can probably remove the last claim now that I changed the definition to make it trivially true.}
\end{pro}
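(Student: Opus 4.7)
The plan is to reduce the claim to the splitting theorem Proposition \ref{proposition: split theorem} via the dictionary between representations and equivariant vector bundles established in Proposition \ref{proposition: representations as equivariant vector bundles}. First, given an $n$-dimensional representation $\rho : G \to \text{GL}_n$, Proposition \ref{proposition: representations as equivariant vector bundles}(1) equips the vector bundle $E$ on $X := \Spec K$ corresponding to the free module $K^n$ with a $G$-equivariant structure. Since $K$ is a semifield, $X$ is a one-point space (in particular irreducible), and $E$ is trivial as a vector bundle because it corresponds to a free module. The $G$-action on $X$ is forced to be trivial for the same reason.

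Next, all hypotheses of Proposition \ref{proposition: split theorem} are in place: $X$ is irreducible, $G$ is an irreducible algebraic group over the idempotent semifield $K$, $G$ acts on $X$, and the $G$-equivariant bundle $E$ is trivial as a vector bundle. Applying the proposition yields a decomposition
\[
E \;=\; L_1 \oplus \cdots \oplus L_n
\]
as a direct sum of $G$-equivariant line bundles on $X$.

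Finally, I translate this back through Proposition \ref{proposition: representations as equivariant vector bundles}. Each $L_i$ is a rank-one free $K$-module equipped with a natural $G$-action, which is the data of a one-dimensional representation $\rho_i : G \to \text{GL}_1$. Choosing a basis of $E$ adapted to the splitting $L_1 \oplus \cdots \oplus L_n$ and using the compatibility of the direct sum decomposition with the maps to $\text{GL}_n$ shows that the equivariant bundle structure on $E$ coincides, after conjugation by the change-of-basis matrix, with the block-diagonal structure coming from $\rho_1 \oplus \cdots \oplus \rho_n$. By Proposition \ref{proposition: representations as equivariant vector bundles}(2), this means $\rho$ is related to $\rho_1 \oplus \cdots \oplus \rho_n$ by an inner automorphism of $\text{GL}_n$, which is exactly what it means for $\rho$ to decompose as a sum of one-dimensional representations.

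The only nontrivial bookkeeping step I expect is the last paragraph: confirming that a splitting of the equivariant bundle $E$ into line sub-bundles really does produce a block-diagonal form of $\rho$ (up to conjugation), rather than merely exhibiting the $\rho_i$ as subquotients. Because $\Spec K$ is a point and $E$ is free, this reduces to the elementary fact that a direct sum decomposition $K^n = L_1 \oplus \cdots \oplus L_n$ into free rank-one submodules admits a basis realizing the splitting, and then Proposition \ref{proposition: representations as equivariant vector bundles}(2) finishes the argument; no substantive difficulty beyond this is anticipated.
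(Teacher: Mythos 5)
Your proposal is correct and follows exactly the route the paper takes: translate the representation into a $G$-equivariant vector bundle on $\Spec K$ via Proposition \ref{proposition: representations as equivariant vector bundles}, note that every vector bundle on $\Spec K$ is trivial, apply the splitting theorem (Proposition \ref{proposition: split theorem}), and translate back. The paper states this in two lines while you spell out the bookkeeping (including the block-diagonalization step via part (2) of Proposition \ref{proposition: representations as equivariant vector bundles}), but the argument is the same.
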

\begin{proof}
Observe that any vector bundle on $\Spec K$ is trivial. Hence, the claim follows immediately from Propositions \ref{proposition: split theorem} and \ref{proposition: representations as equivariant vector bundles}.
\end{proof}

At the opposite extreme we may consider an abstract group $G$.  By the same argument as in \cite[Appendix B]{jun2024equivariant}, we obtain an affine algebraic group $G_K$ over $\Spec K$ such that for any connected $K$-algebra $A$, we have an isomorphism $G \cong G_K(A)$.

%Recall that $\text{GL}_n$ is an affine algebraic group over $\mathbb{N}$ (dual to the algebra generated by elements $A_{ij}$, $B_{ij}$ subject to matrix relations $AB = BA = I$) and that its $A$-valued points correspond to invertible matrices over $A$ for any $K$-algebra $A$.  Next o
%Observe that if $E$ is the vector bundle over $\Spec K$ corresponding to the vector space $K^n$, then a linear $G_K$-action on $E$ is just a natural transformation
%\[
%G_K(A) \times E(A) \rightarrow E(A) 
%\]
%satisfying the action axioms. Clearly, these correspond to natural group homomorphisms 
%\[
%G_K(A) \to \text{Aut}(E(A)) = \text{Aut}(A^n) = \text{GL}_n(A).
%\]
%And these correspond to homomorphisms of group schemes:
%\[
%G_K \to \text{GL}_n.
%\]
%\blue{We can replace the above paragraph with a reference to \ref{proposition: representations as equivariant vector bundles}.  Or we can just delete it, since we're now defining a representation as a homomorphism to $GL_n$.}

Since $G_K$ is just a disjoint union of copies of $\Spec K$, a morphism of schemes $G_K \to \text{GL}_n$ is a collection of elements of $\text{GL}_n(K)$ indexed by $G$, i.e., a map 
\[
G \to \text{GL}_n(K).
\]
Then we would have to show that the further condition that $G_K \to \text{GL}_n$ is a homomorphism is equivalent to the corresponding map $G \to \text{GL}_n(K)$ being a homomorphism.

\begin{pro}\label{proposition: abstract and algebraic}
Let $G$ be a group and $K$ a semifield.  Let $G_K$ be as above.  Then there is a one-to-one correspondence between representations over $K$ of $G$ and of $G_K$.
\end{pro}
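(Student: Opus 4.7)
The plan is to use the disjoint-union description $G_K = \coprod_{g \in G} \Spec K$ (established in the paragraph preceding the statement). By the universal property of coproducts of schemes, scheme morphisms $f: G_K \to \text{GL}_n$ are in bijection with $G$-indexed families of morphisms $\Spec K \to \text{GL}_n$, equivalently with functions $\rho: G \to \text{GL}_n(K)$, $g \mapsto M_g$. The proposition therefore reduces to showing that $\rho$ is a group homomorphism exactly when the corresponding scheme morphism $f_\rho$ is a homomorphism of group schemes over $\Spec K$.

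For the ``only if'' direction, I would simply evaluate a group scheme homomorphism $f$ on $K$-points. Since any semifield is connected, $G_K(K) = G$, so $f(K): G \to \text{GL}_n(K)$ is a group homomorphism by hypothesis, and by construction this homomorphism is precisely the function $\rho$ associated with $f$.

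For the ``if'' direction, assume $\rho$ is a group homomorphism. Since fiber products of affine schemes distribute over coproducts,
\[
G_K \times_{\Spec K} G_K \;\cong\; \coprod_{(g,h) \in G \times G} \Spec K,
\]
and the compatibility of the group scheme structure on $G_K$ with the identifications $G_K(K) = G$ and $(G_K \times G_K)(K) = G \times G$ forces the multiplication $m: G_K \times G_K \to G_K$ to send the $(g,h)$-component isomorphically to the $(gh)$-component of $G_K$ under the canonical identification of each component with $\Spec K$. The diagram
\[
\begin{tikzcd}
G_K \times G_K \arrow[r, "m"] \arrow[d, "f_\rho \times f_\rho"'] & G_K \arrow[d, "f_\rho"] \\
\text{GL}_n \times \text{GL}_n \arrow[r] & \text{GL}_n
\end{tikzcd}
\]
then commutes on the $(g,h)$-component if and only if $M_g M_h = M_{gh}$ in $\text{GL}_n(K)$, which holds by hypothesis; compatibility with the identity section is checked analogously via $M_e = I$. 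Hence $f_\rho$ is a group scheme homomorphism.

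The only delicate point is pinning down the multiplication morphism of $G_K$ on the nose in the coproduct description, i.e., verifying that it carries the $(g,h)$-component to the $(gh)$-component. Once one accepts this (from the construction of $G_K$ in the style of \cite[Appendix B]{jun2024equivariant}, where the Hopf semiring structure on $\mathcal{O}(G_K) = \prod_{g \in G} K$ is dual to the group operation on $G$), the rest of the argument is componentwise bookkeeping that exactly matches the homomorphism axioms for $\rho$.
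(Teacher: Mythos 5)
Your proof is correct and follows essentially the same route as the paper: both rest on the disjoint-union description $G_K=\coprod_{g\in G}\Spec K$ (so that scheme morphisms $G_K\to \text{GL}_n$ are functions $G\to\text{GL}_n(K)$) together with $G_K(K)\cong G$ for the connected semifield $K$. The only difference is one of detail: the paper phrases the correspondence through $G_K$-equivariant vector bundles and compresses the converse direction into ``one can easily observe,'' whereas you spell out that observation via the componentwise check of the multiplication diagram on $G_K\times_{\Spec K}G_K\cong\coprod_{(g,h)}\Spec K$, which is exactly the verification the paper leaves implicit.
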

\begin{proof}
%\blue{TODO: needs proof}
Let $E$ be a representation of $G_K$ over $K$. From Proposition \ref{proposition: representations as equivariant vector bundles}, we may assume that $E$ is a $G_K$-equivariant vector bundle on $X=\Spec K$. Then, $E$ corresponds to a finite free $K$-module $V$, and we have $E(K)=V$. On the other hand, we have $G_K(K)=G$ and hence we have an action $\varphi:G \times V \to V$, giving us a representation $\psi:G \to \text{Aut}(V)$ sending $g$ to $\varphi(g,-)$. 

Conversely, suppose that we have a representation
\begin{equation}\label{eq: representation}
\psi:G \to \text{Aut}(V)    
\end{equation}
for some finite free $K$-module $V$. Let $E$ be the vector bundle corresponding to $V$ on $X=\Spec K$. One can easily observe that $\psi$ defines a $G_K$-equivariant structure on $E$: as mentioned above, $\psi$ corresponds to a morphism of schemes $G_K \to \text{GL}_n$, which is necessarily a homomorphism since $\psi$ is a homomorphism.

%\jaiung{need details for the last sentence.} \orange{doesn't this just come from the module structure on $V$ i.e. $G\times V \to V$? }

%To be precise, for any $K$-algebra $A$ with finitely many connected component, we have $G_K(A)=G$ and $E(A)=V\otimes_KA$. Hence the action is induced by $\psi$ on the first coordinate:
%\[
%\psi_A:G_K(A) \times E(A) \to E(A), \quad (g,v\otimes a) = (gv)\otimes a.
%\]
%For a general $A$, $G_K(A)$ and $E(A)$ are 
%\jaiung{I need to check that this is indeed an equivariant bundle,}
\end{proof}

\subsection{Representation of abstract groups}

We now turn to the case of abstract groups rather than algebraic groups.

\begin{mydef}
Let $K$ be a semifield, and $G$ a group.  Let $V$ be a representation of $G$ over $K$.  $V$ is said to be \emph{indecomposable} if it cannot be written as a direct sum of nontrivial subrepresentations.
\end{mydef}

For the purpose of the following example, we remark that representations are equivalent to $K[G]$-modules by the classical argument, so we can define the regular representation by viewing $K[G]$ as a module over itself.  In the classical setting, the regular representation contains all irreducible representations as summands.  The zero-sum-free case behaves very differently though.

\begin{myeg}
Let $K$ be a zero-sum-free semifield, $G$ be a group, and let $V = K[G]$ be the regular representation.  Suppose we can decompose $V$ as a direct sum $V_1 \oplus V_2$ of nontrivial subrepresentations.  We obtain $K[G]$-module endomorphisms $p_1, p_2$ defined for $v_1 \in V_1$ and $v_2 \in V_2$ by $p_1(v_1 + v_2) = v_1$ and $p_2(v_1 + v_2) = v_2$.  Clearly neither is zero, but $p_1 p_2 = 0$.  This gives rise to nontrivial zero-divisors in $\mathrm{End}_{K[G]}(V) = K[G]$, in contradiction to Proposition \ref{proposition: K[G]-modle}. Thus the regular representation is indecomposable.
\end{myeg}

The key tool for studying representations of groups over idempotent semifields is the set of basis lines defined as follows, together with the group action on this set.
\begin{mydef}Let $V$ be a free module over a connected zero-sum-free semiring.  Pick a basis for $V$.  We refer to the (one-dimensional) submodules spanned by a single basis vector as the \textit{basis lines}.
\end{mydef}

\begin{lem}\label{lemma: basis lines}
Let $V$ be a free module over a connected zero-sum-free semiring.  The set of basis lines does not depend on the choice of basis.  Moreover, any automorphism $f: V \rightarrow V$ induces a permutation $\sigma_f$ of the set of basis lines by sending the line spanned by a basis vector $v$ to $\mathrm{span}(f(v))$.  Given two automorphisms $f, g$, we have $\sigma_{fg} = \sigma_f \sigma_g$.
\end{lem}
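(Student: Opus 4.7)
The plan is to reduce the lemma to a single structural fact about invertible matrices: over a connected zero-sum-free semiring $K$, every $M \in \text{GL}_n(K)$ is \emph{monomial}, i.e.\ each row and column of $M$ has exactly one nonzero entry, and that entry lies in $K^\times$. Granted this, the lemma is essentially formal. If $\{e_i\}$ and $\{e'_j\}$ are two bases, the change-of-basis matrix is invertible, hence monomial, so $e'_j = u_j e_{\sigma(j)}$ for units $u_j$ and a permutation $\sigma$; therefore $\mathrm{span}(e'_j) = \mathrm{span}(e_{\sigma(j)})$ and the collection of basis lines is independent of the chosen basis. Applied to an automorphism $f$, the same observation gives $f(e_i) = u_i e_{\tau(i)}$ with $\tau$ a permutation and $u_i \in K^\times$, so setting $\sigma_f(\mathrm{span}(e_i)) := \mathrm{span}(e_{\tau(i)})$ yields a well-defined permutation of basis lines; the composition law $\sigma_{fg} = \sigma_f \sigma_g$ is immediate from evaluating $(f \circ g)$ on a basis vector.

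To establish the structural fact, write $A = (a_{ij})$ and $B = (b_{ij})$ with $AB = BA = I$. Zero-sum-freeness applied to each off-diagonal identity yields
\[
a_{ik} b_{kj} = 0 \quad \text{and} \quad b_{ik} a_{kj} = 0 \qquad \text{for every } k \text{ whenever } i \neq j.
\]
Fix $k$ and set $e_{ki} := a_{ki} b_{ik} \in K$. The diagonal identity gives $\sum_i e_{ki} = (AB)_{kk} = 1$, while orthogonality
\[
e_{ki}\, e_{kj} \;=\; a_{ki}\, (b_{ik} a_{kj})\, b_{jk} \;=\; 0 \qquad (i \neq j)
\]
follows from the $BA$ vanishing identity. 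Idempotence $e_{ki}^2 = e_{ki}$ is then automatic from $e_{ki} = e_{ki} \cdot \sum_j e_{kj}$.

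Connectedness of $K$ performs the decisive step: since $\bigl(e_{ki},\; \sum_{j \neq i} e_{kj}\bigr)$ is an idempotent pair summing to $1$, we must have $e_{ki} \in \{0,1\}$, and exactly one index $\sigma(k)$ satisfies $e_{k\sigma(k)} = 1$. Hence $a_{k\sigma(k)} b_{\sigma(k) k} = 1$ exhibits $a_{k\sigma(k)}$ as a unit. The symmetric analysis (with the roles of $A$ and $B$ swapped) shows that $\sigma$ is a bijection and produces units $b_{i,\sigma^{-1}(i)}$ in row $i$ of $B$. Substituting back into $a_{ki} b_{i,\sigma^{-1}(i)} = 0$, which holds because $k \neq \sigma^{-1}(i)$ when $i \neq \sigma(k)$, and cancelling the unit $b_{i,\sigma^{-1}(i)}$ forces $a_{ki} = 0$ for all $i \neq \sigma(k)$. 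Thus row $k$ of $A$ has its unique nonzero entry in column $\sigma(k)$ and that entry is a unit, and $A$ is monomial.

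The main obstacle is establishing that the $e_{ki}$ form orthogonal idempotents so that connectedness can be invoked, because zero-sum-freeness alone does not preclude zero divisors and one cannot simply cancel factors at will. The trick is to package $a_{ki} b_{ik}$ into an idempotent pair with $\sum_{j \neq i} e_{kj}$, after which connectedness of $K$ reduces the remainder of the argument to permutation bookkeeping.
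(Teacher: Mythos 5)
Your proof is correct, and it differs from the paper's in one substantive respect: the paper deduces everything from a citation to \cite[Proposition 3.15]{JMT20}, which asserts that a basis of a free module over a connected zero-sum-free semiring is unique up to permutation and rescaling, whereas you prove the equivalent structural fact from scratch, namely that every invertible matrix over such a semiring is monomial. Your derivation is sound: zero-sum-freeness kills each individual term of the off-diagonal entries of $AB$ and $BA$, the elements $e_{ki}=a_{ki}b_{ik}$ are then genuinely orthogonal idempotents summing to $1$ (the orthogonality step correctly routes through the vanishing of the single term $b_{ik}a_{kj}$ of $(BA)_{ij}$ rather than attempting any cancellation), and connectedness --- phrased in the paper as the absence of nontrivial idempotent pairs --- forces each $e_{ki}$ into $\{0,1\}$, after which the permutation bookkeeping and the cancellation of the unit $b_{i,\sigma^{-1}(i)}$ are all legitimate. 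The formal reduction of the lemma to the monomial fact, and the composition law $\sigma_{fg}=\sigma_f\sigma_g$, match the paper's treatment exactly. What your route buys is self-containedness (it in effect reproves the cited proposition, and even yields invariance of basis number along the way, since $\sigma$ and $\tau$ are mutually inverse); what it costs is length, and a small amount of unaddressed care would be needed if one wanted the argument to cover infinite bases, which the lemma's statement does not exclude --- though the paper's citation-based proof is no more explicit on that point.
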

\begin{proof}We recall \cite[Proposition 3.15]{JMT20}, which says the basis of $V$ is unique up to permutation and rescaling.  A basis line is unchanged by rescaling the corresponding basis vector.  And the set of basis lines is unordered, so it is not changed by permutations of the basis either.

Given a basis $v_1, \ldots, v_n$ and an automorphism $f$, we obtain a new basis $f(v_1), \ldots, f(v_n)$.  Since this yields the same set of basis lines, the map sending $\mathrm{span}(v_i)$ to $\mathrm{span}(f(v_i))$ indeed sends basis lines to basis lines.  This map is invertible because $f$ is invertible.  

The final claim follows from the fact that both maps send $\mathrm{span}(v_i)$ to $\mathrm{span}(f(g(v_i)))$.
\end{proof}

If $V$ is a representation of $G$, we have a homomorphism from $G$ to the module automorphism group of $V$, and the above result gives a homomorphism from this automorphism group to the set of permutations of the basis lines.  Thus we have the following.

\begin{cor}
Let $K$ be a zero-sum-free semifield, and $G$ a group.  Let $V$ be a representation of $G$ over $K$.  Then $G$ acts on the set of basis lines of $V$.  Specifically, for $g \in G$ and for a vector $v$ belonging to some basis, we have $g(\mathrm{span}(v)) = \mathrm{span}(gv)$.
\end{cor}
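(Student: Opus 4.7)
The plan is to obtain this as an essentially immediate consequence of Lemma \ref{lemma: basis lines}. Since $V$ is a representation of $G$ over $K$, we have by definition a group homomorphism $\rho : G \to \mathrm{Aut}_K(V)$ into the module automorphism group of $V$; I would simply compose this with the homomorphism $\mathrm{Aut}_K(V) \to \mathrm{Sym}(L)$, $f \mapsto \sigma_f$, provided by Lemma \ref{lemma: basis lines}, where $L$ denotes the (basis-independent) set of basis lines of $V$.

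First I would verify that the composition $g \mapsto \sigma_{\rho(g)}$ is a group homomorphism $G \to \mathrm{Sym}(L)$, i.e.\ an action of $G$ on $L$. This is immediate: the identity acts as the identity permutation, and the relation $\sigma_{fg} = \sigma_f \sigma_g$ from Lemma \ref{lemma: basis lines} combined with the multiplicativity of $\rho$ gives $\sigma_{\rho(gh)} = \sigma_{\rho(g)\rho(h)} = \sigma_{\rho(g)}\sigma_{\rho(h)}$. Next I would unwind the formula: for $g \in G$ and a basis vector $v$, Lemma \ref{lemma: basis lines} defines $\sigma_{\rho(g)}$ to send $\mathrm{span}(v)$ to $\mathrm{span}(\rho(g)(v)) = \mathrm{span}(gv)$, which is exactly the claimed description.

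There is essentially no obstacle here; this is a bookkeeping corollary that packages the two already-established facts (namely, that a representation is a homomorphism to $\mathrm{Aut}_K(V)$, and that automorphisms permute basis lines functorially) into the statement that $G$ acts on $L$. The only thing worth emphasizing when writing it up is that the action is well-defined because, by Lemma \ref{lemma: basis lines}, the set $L$ of basis lines and the induced permutations $\sigma_f$ do not depend on the chosen basis — so the formula $g \cdot \mathrm{span}(v) = \mathrm{span}(gv)$ makes sense for $v$ belonging to \emph{any} basis.
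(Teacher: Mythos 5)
Your proposal is correct and matches the paper's own (implicit) argument exactly: the paper derives this corollary by composing the homomorphism $G \to \mathrm{Aut}_K(V)$ with the homomorphism to the permutation group of the basis lines furnished by Lemma \ref{lemma: basis lines}. Your write-up simply spells out the same bookkeeping in slightly more detail.
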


We will show that indecomposability is equivalent to the action on the set of basis lines being transitive.  This may be used to decompose a representation as a sum of indecomposable representations by decomposing the set of basis lines into orbits.

\begin{pro}
Let $V$ be a representation of a group $G$ over a zero-sum-free semifield $K$.  Then $V$ is indecomposable if and only if $G$ acts transitively on the set of basis lines of $V$.  In addition, any representation $V$ may be uniquely decomposed as a direct sum of indecomposable representations.
\end{pro}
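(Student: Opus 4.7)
The plan is to leverage the canonical $G$-action on the set $\mathcal{L}$ of basis lines of $V$ (Lemma~\ref{lemma: basis lines} and the corollary preceding the proposition), establishing a bijection between direct sum decompositions of $V$ as a representation and $G$-invariant partitions of $\mathcal{L}$. Both the equivalence in the proposition and the uniqueness of the indecomposable decomposition then follow from the orbit decomposition of $\mathcal{L}$ under $G$.

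For the direction \emph{not transitive} $\Rightarrow$ \emph{decomposable}, I would take a proper $G$-orbit $O \subsetneq \mathcal{L}$ and let $V_1$ (resp.\ $V_2$) be the $K$-submodule of $V$ spanned by the basis vectors whose lines lie in $O$ (resp.\ in $\mathcal{L}\setminus O$). Since each $g\in G$ sends any basis vector to a scalar multiple of a basis vector in the same $G$-orbit, both $V_i$ are $G$-invariant, so $V = V_1 \oplus V_2$ is a nontrivial decomposition of representations.

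The harder direction is \emph{transitive} $\Rightarrow$ \emph{indecomposable}, and this is where the zero-sum-free and semifield hypotheses do the real work. Suppose $V = V_1 \oplus V_2$ as representations with both summands nonzero. Fix a basis $v_1,\dots,v_n$ of $V$; the projections $e_1\colon V\to V_1$ and $e_2\colon V\to V_2$ have matrices $M_1, M_2$ in this basis satisfying $M_1 + M_2 = I$ and $M_1 M_2 = 0$. The zero-sum-free hypothesis applied to $M_1 + M_2 = I$ forces every off-diagonal entry of each $M_i$ to vanish; the relation $M_1 M_2 = 0$ on the resulting diagonal matrices, together with the fact that nonzero elements of a semifield are invertible, then forces each diagonal entry of $M_1$ to be $0$ or $1$, with $M_2$ having complementary diagonal. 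Hence $V_1$ is spanned by a subset of the basis and $V_2$ by its complement, producing a nontrivial $G$-invariant partition of $\mathcal{L}$, contradicting transitivity.

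For the uniqueness of the indecomposable decomposition, partition $\mathcal{L} = \bigsqcup_i O_i$ into $G$-orbits and let $V_i$ be the span of the basis vectors whose lines lie in $O_i$; the first half of the proposition shows $V = \bigoplus_i V_i$ is a decomposition into indecomposable subrepresentations. Any other such decomposition, by iterating the matrix analysis of the previous paragraph, induces a $G$-invariant partition of $\mathcal{L}$; indecomposability of each summand together with the forward direction forces every block of this partition to be a single $G$-orbit, so the decomposition must agree with $\bigoplus_i V_i$ up to reordering. The principal obstacle is the matrix analysis in the third paragraph, which is precisely where both the zero-sum-free and the semifield hypotheses are essential and without which direct summands need not be spanned by a subset of the basis.
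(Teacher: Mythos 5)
Your proof is correct, and its overall architecture --- translating direct sum decompositions of $V$ into $G$-invariant partitions of the set of basis lines and then invoking the orbit decomposition --- is the same as the paper's. The one place you genuinely diverge is the step showing that a direct summand of a free module is spanned by a subset of a fixed basis. The paper gets this by choosing a basis of $V$ as the union of bases of $V_1$ and $V_2$ and appealing to Lemma~\ref{lemma: basis lines} (i.e.\ the uniqueness of bases up to permutation and rescaling from \cite[Proposition 3.15]{JMT20}), so that the set of basis lines computed from this adapted basis agrees with the one computed from the original basis. You instead work with the projection idempotents $e_1,e_2$ and their matrices, using zero-sum-freeness on $M_1+M_2=I$ to kill the off-diagonal entries and the absence of zero divisors in a semifield on $M_1M_2=0$ to force complementary $0$--$1$ diagonals. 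Your route is more self-contained (it does not need the uniqueness-of-basis result, essentially reproving the special case it uses), at the cost of being tied to the semifield hypothesis, whereas the paper's argument is stated for connected zero-sum-free semirings at the level of Lemma~\ref{lemma: basis lines}; within the scope of this proposition ($K$ a zero-sum-free semifield) both are equally valid, and your verification that the resulting partition of basis lines is $G$-invariant and that each block of any indecomposable decomposition must be a single orbit matches the paper's.
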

\begin{proof}Let $S$ be the $G$-set of basis lines.  First we relate direct sum decompositions of $V$ to union decompositions of $S$.  If $V = V_1 \oplus V_2$ is a direct sum of nontrivial representations, let $S_1$ and $S_2$ be the $G$-sets of basis lines of $V_1$ and $V_2$.  We may form a basis of $V$ as a union of bases of $V_1$ and $V_2$; by looking at the corresponding basis lines we see $S = S_1 \cup S_2$, so $S$ is a union of nonempty $G$-sets.

Conversely suppose we have a nontrivial disjoint union decomposition $S = S_1 \cup S_2$ for $G$-sets $S_1$ and $S_2$.  Then a basis for $V$ can be decomposed into basis vectors belonging to some line in $S_1$ and those belonging to a line in $S_2$.  Let $V_1$ and $V_2$ be the free modules spanned by these two subsets of the basis; thus we obtain a nontrivial decomposition $V = V_1 \oplus V_2$.  We show these summands are subrepresentations.  If $v$ is a basis vector of $V_1$, then $gv$ belongs to the line $\mathrm{span}(gv) = g(\mathrm{span}(v))$, and this is in $S_1$ (because $S_1$ is closed under the action).  Thus $gv \in V_1$.  More generally, if $v \in V_1$, write $v = \sum a_i v_i$ where $v_i$ ranges over a basis for $V_1$ and then $gv = \sum a_i (gv_i) \in V_1$.  Similarly $V_2$ is also a subrepresentation.

If $V$ is indecomposable, then there is no nontrivial direct sum decomposition, so $S$ has no nontrivial disjoint union decomposition.  But $S$ can be decomposed as a union of orbits, so there is only one orbit.  Conversely suppose $G$ acts transitively on $S$.  Then $S$ has no nontrivial disjoint union decomposition, so $V$ is indecomposable.

In general, we may write $S$ as a union $S_1 \cup \ldots \cup S_k$ of orbits, yielding a direct sum decomposition $V = V_1 \oplus \ldots \oplus V_k$.  Each $V_k$ has a set $S_k$ of basis lines with a single orbit, so is indecomposable.  As an alternative proof, we could also decompose $V$ into indecomposable representations via straightforward induction on dimension.
\end{proof}

\begin{rmk}\label{remark: subtractive irreducibility}
It is easy to see that if we define irreducibility in the obvious way, then any nonzero irreducible representation over a zero-sum-free semifield is one-dimensional.  As an alternative, we may call a representation $V$ \emph{subtractively irreducible} if every subtractive $K$-submodule\footnote{See the paragraph right before Lemma \ref{lemma: dual lemma} for the definition.} which is closed under the $G$-action is $V$ or $0$.  Over an idempotent semifield, subtractive submodules of free modules correspond to subsets of the set of basis lines.  So $V$ is subtractively irreducible over $K[G]$ if and only if the set of basis lines has no nontrivial $G$-subsets, i.e., if and only if it consists of a single orbit.  So in the idempotent case, the above proposition provides a unique decomposition as a sum of subtractively irreducible subrepresentations.
\end{rmk}

While we focus only on finite-dimensional representations, we note that the above proof applies in the infinite-dimensional case as well.

\begin{rmk}
Suppose $G$ is a finite group.  Then for any indecomposable representation over a zero-sum-free semifield, the set of basis lines is a transitive $G$-set, so its cardinality divides $|G|$.  But the cardinality of the set of basis lines is the dimension.  Thus every indecomposable representation has dimension dividing $|G|$.
\end{rmk}

 Next we classify indecomposable representations.  They correspond to pairs of a subgroup and a equivalence class of characters on the subgroup.

 In the following, for each subgroup $H$ pick a subset $S_H \subseteq G$ consisting of one element of each left coset (the one-to-one correspondences below depend on these choices, so are not quite canonical).

\begin{pro}\label{proposition: classification of indecomposable representations}
Let $G$ be a group and $K$ be a zero-sum-free semifield.  
\begin{enumerate}
\item 
There is a one-to-one correspondence between isomorphism classes of pairs\footnote{By this, we mean an equivalence class of pairs with $v \in V$ where $(V, v)$ is equivalent to $(W, w)$ if there's an isomorphism $V \to W$ mapping $v$ to $w$.} consisting of an indecomposable representation $V$ together with a vector $v$ belonging to some basis of $V$ and pairs consisting of a subgroup $H\subseteq G$ and a homomorphism  $\chi: H\rightarrow K^\times$.
\item 
There is a one-to-one correspondence between isomorphism classes of indecomposable representations and equivalence classes of pairs consisting of a subgroup $H\subseteq G$ and a homomorphism $\chi: H\rightarrow K^\times$ where $(H, \chi) \sim (H', \chi')$ if there is some $g\in G$ such that $H' = gHg^{-1}$ and for all $h'\in H$ we have $\chi'(h') = \chi(g^{-1}h'g)$.
\end{enumerate}
\end{pro}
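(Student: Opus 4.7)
The plan is to model the proof on the classical orbit-stabilizer picture for transitive $G$-sets, enhanced with character data recording how $G$ acts along each basis line. The previous proposition already tells us that an indecomposable $V$ is exactly one whose set of basis lines is a single $G$-orbit, so the remaining content is to pin down each such representation by a stabilizer subgroup together with the scalar action of that stabilizer on a chosen basis vector.

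For the forward direction of (1), given $(V,v)$ I would set $L = \mathrm{span}(v)$, take $H$ to be the stabilizer of $L$ under the $G$-action on basis lines from Lemma \ref{lemma: basis lines}, and define $\chi:H \to K^\times$ by the uniquely determined relation $hv = \chi(h)v$. Uniqueness of $\chi(h)$ follows because $v$ extends to a basis (so coordinate expansions are unique), and $\chi(h)$ lands in $K^\times$ because $h$ acts invertibly, forcing $\chi(h)\chi(h^{-1}) = 1$. The same uniqueness makes $\chi$ a group homomorphism. For the reverse direction, given $(H,\chi)$ I would build $V$ as a semifield analogue of the induced representation: let $V$ have basis $\{v_g\}_{g\in S_H}$, and for $g' \in G$ with $g'g = g''h$ the unique decomposition with $g'' \in S_H$, $h \in H$, set $g'\cdot v_g := \chi(h)\,v_{g''}$. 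Verifying that this is a group action is a direct check using the homomorphism property of $\chi$, and transitivity on basis lines (via $g\cdot v_e = v_g$) together with the previous proposition gives indecomposability.

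The round-trip checks are then routine: starting from $(H,\chi)$ and passing through $(V,v_e)$ returns the same pair by construction; starting from $(V,v)$, one recovers a representation $V'$ on generators $v_g$, and the map $f:V'\to V$, $f(v_g) = g\cdot v$, is $G$-equivariant by the same coset computation used to define the action, and sends the basis of $V'$ to a basis of $V$ (the vectors $g\cdot v$ for $g\in S_H$ give one nonzero element in each of the $|G/H|$ distinct basis lines, hence a basis by Lemma \ref{lemma: basis lines}). Thus $f$ is an isomorphism carrying $v_e$ to $v$, establishing (1). For (2), an isomorphism of representations lets me assume the two pairs live inside the same $V$ with distinguished basis vectors $v$ and $v'$. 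Transitivity supplies $g \in G$ and $\lambda\in K^\times$ with $gv = \lambda v'$, and the computation $g^{-1}h'g\cdot v = \lambda^{-1} g^{-1} h' (gv) = \chi'(h')v$ immediately gives $H = g^{-1}H'g$ and $\chi(g^{-1}h'g) = \chi'(h')$. The converse direction is the observation that equivalent pairs $(H,\chi)\sim(gHg^{-1},\chi(g^{-1}(-)g))$ produce isomorphic induced representations via the reindexing $g'H \mapsto g'g^{-1}\cdot gHg^{-1}$ of coset representatives.

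I do not expect a single hard step here — the proof is primarily careful bookkeeping — but the most delicate point is dependence on the chosen system of coset representatives $S_H$. Different choices of $S_H$ give canonically isomorphic representations and hence do not affect the statement, but to keep the correspondence clean I would fix $S_H$ throughout (as the statement anticipates) and absorb all rescalings into the character $\chi$, so that the group-action verification $g'g = g''h$ is unambiguous and the isomorphism $f$ in the round-trip is literally determined on the nose by $v_g \mapsto g\cdot v$.
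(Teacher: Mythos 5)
Your proposal is correct and follows essentially the same route as the paper: extract $H$ as the stabilizer of the basis line $\mathrm{span}(v)$ with $\chi$ defined by $hv=\chi(h)v$, reconstruct $V$ as the induced representation on coset representatives $S_H$ with action $g'\cdot v_g=\chi(h)v_{g''}$ for $g'g=g''h$, and prove (2) by the same conjugation computation $g^{-1}h'gv=\chi'(h')v$ after writing the second distinguished vector as a unit times $gv$. The only point you compress slightly is the equality $H=g^{-1}H'g$, which (as in the paper) requires the symmetric argument for the reverse inclusion, but this is a one-line check.
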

\begin{proof}Let $V$ be an indecomposable representation and $v$ be an element of some basis (i.e. $v$ spans some basis line).  We construct a subgroup $H$ as the stabilizer of the basis line $\mathrm{span}(v)$.  For each $h \in H$, $hv$ belongs to a basis, and spans the same basis line.  Thus there is a unit $\chi(h)$ such that $hv = \chi(h)v$.  If $h'\in H$ then $\chi(hh')v = hh' v = \chi(h) \chi(h') v$.  Since $v$ is a basis vector, $\chi(hh') = \chi(h) \chi(h')$ so $\chi$ is a homomorphism.

The transitivity of the action on the set of basis lines implies each basis line has the form $\mathrm{span}(gv)$ for some $g\in G$.  Writing $g = sh$ with $s \in S_H$ and $h \in H$, each basis line has the form $\mathrm{span}(sv)$.  So we obtain a basis for $V$ of the form $\{ sv \mid s \in S_H \}$.

Given $g = sh$ for $s \in S_H$ and $h \in H$, we obtain $gv = shv = \chi(h) sv$, so $H$ and $\chi$ determine how $G$ acts on $v$.  This determines the $G$-action on all basis vectors (hence on all vectors by linearity) because acting on $s'v$ for some $s' \in S_H$ by $g$ yields $(gs')v$, and we have specified how group elements act on $v$.  Thus the pair $(V, v)$ is determined up to isomorphism by $(H, \chi)$.  

Similarly one may follow this construction to show each pair $(H, \chi)$ yields an indecomposable representation $V$, and an element $v$ of a basis. To be precise, we take $V$ to be the free module whose basis is the set $G/H$ of left cosets, and let $v$ be the coset $H$.  Every element of $G$ can be uniquely written as $sh$ where $s \in S_H$ (the set of chosen representatives of left cosets) and $h \in H$.  For a basis vector (i.e. left coset) $tH$ with $t \in S_H$, we define the action by 
\[
g (tH) = s \chi(h) v,
\]
where $gt = sh$ with $s \in S_H$ and $h \in H$. One can easily check that this induces a transitive $G$-action on the set of basis lines, in particular, it defines an indecomposable representation of $G$. 

It remains to show that two pairs $(H, \chi)$ and $(H', \chi')$ yield isomorphic representations (with different basis vectors) if and only if they are conjugate by some $g\in G$.  Let $(V, v)$ be as above and let $w$ be an element of some other basis.  Because $G$ acts transitively on basis lines, there is a unit $u\in K^\times$ and some $g\in G$ such that $w = u(gv)$.  Observe that 
\begin{equation}
u(h'gv) = h'w = \chi'(h') w = u\chi'(h') gv.
\end{equation}
We may cancel the $u$ and act by $g^{-1}$ to obtain $g^{-1}h'gv = \chi'(h') v$. Thus $\mathrm{span}(v)$ is stabilized by $g^{-1}h'g$ (so it belongs to $H$) and $\chi'(h') = \chi(g^{-1} h' g)$.  We have seen $g^{-1} H' g \subseteq H$, i.e., $H' \subseteq gHg^{-1}$.  The reverse inclusion is obtained by swapping the roles of $v$ and $w$.
%\blue{TODO: there may be some (probably straightforward) step missing in this proof; for instance, the last step only proves one direction, but I think the converse can be proven similarly.} \orange{it's fairly basic, so leave it like this}
\end{proof}

Things get even simpler if we focus on finite groups.  For this, we need the following well-known result from the theory of lattice-ordered groups.

\begin{lem}\label{lemma: torsionfree}
Let $K$ be an idempotent semifield.  Then $K^\times$ is torsion-free.
\end{lem}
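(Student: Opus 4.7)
The plan is to exploit the natural partial order that idempotency puts on $K$ and show that any would-be torsion element is forced to be the identity by a standard ``sum over orbit'' averaging trick.

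First, I would define the relation $a \leq b$ on $K$ by $a \leq b \iff a + b = b$. Idempotency ($a+a=a$) gives reflexivity, and the usual manipulations using commutativity and associativity of addition give antisymmetry and transitivity, so this is a partial order with $0$ as the least element. Crucially, multiplication is monotone: if $a \leq b$ then $c + cb = c(1) + c(a+b) = \ldots$, more simply, $ca + cb = c(a+b) = cb$, so $ca \leq cb$ for every $c \in K$. In particular, for any $x \in K^\times$, multiplication by $x$ is an order isomorphism of $K$.

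Now suppose $a \in K^\times$ satisfies $a^n = 1$ for some $n \geq 1$; I want to show $a = 1$. The key construction is the element
\[
s \;=\; 1 + a + a^2 + \cdots + a^{n-1} \in K.
\]
Multiplying by $a$ permutes the summands cyclically and uses $a^n = 1$, giving
\[
a s \;=\; a + a^2 + \cdots + a^{n-1} + a^n \;=\; a + a^2 + \cdots + a^{n-1} + 1 \;=\; s.
\]
Since $s \geq 1$ in the order above and $1 \neq 0$, we have $s \neq 0$; as $K$ is a semifield, $s$ is invertible. Multiplying $as = s$ by $s^{-1}$ yields $a = 1$, as required.

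The only real subtlety to check is that $s \neq 0$: this uses that $K$ is zero-sum-free (so $1 + a + \cdots + a^{n-1} = 0$ would force each summand to vanish, contradicting $1 \neq 0$), which is automatic for a semifield with $0 \neq 1$. Everything else is a direct and routine verification from the semifield axioms plus idempotency; the heart of the argument is the cyclic-sum trick, which is the standard proof that any lattice-ordered (abelian) group is torsion-free, specialized to the multiplicative group of an idempotent semifield.
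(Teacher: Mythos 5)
Your proof is correct and uses essentially the same argument as the paper: form $s = 1 + a + \cdots + a^{n-1}$, observe $as = s$, note $s \neq 0$ so $s$ is a unit, and cancel. The only cosmetic difference is how nonvanishing of $s$ is justified (you invoke zero-sum-freeness of idempotent semirings, the paper notes $s + 1 = s$ is not satisfied by $0$); both are valid.
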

\begin{proof}
Let $x$ be a torsion element, i.e., $x^n = 1$ for some $n$.  Let $u = 1 + x + \ldots + x^{n-1}$.  By idempotence $u + 1 = u$; since $0$ does not satisfy this equation, $u$ is nonzero and hence a unit.  Now observe that
\[ xu = x + x^2 + \ldots + x^n = x + x^2 + \ldots + x^{n-1} + 1 = 1 + x + \ldots + x^{n-1} = u \]
Dividing by $u$ gives $x = 1$ as desired.
\end{proof}

\begin{pro}\label{proposition: representations of finite groups}
Let $G$ be a finite group (or more generally a torsion group) and $K$ an idempotent semifield. 
\begin{enumerate}
    \item 
There is a one-to-one correspondence between isomorphism classes of indecomposable representations of $G$ over $K$ and conjugacy classes of subgroups $H \subseteq G$.
    \item 
Furthermore, there is a one-to-one correspondence between isomorphism classes of representations and isomorphism classes of $G$-sets, sending a representation to its set of basis lines.  
\end{enumerate}
In the case $K = \mathbb{B}$, the above hold without any assumption on $G$.
\end{pro}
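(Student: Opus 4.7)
The plan is to derive both statements from Proposition \ref{proposition: classification of indecomposable representations}, using Lemma \ref{lemma: torsionfree} to eliminate the character $\chi$ from the classifying data. Specifically, Proposition \ref{proposition: classification of indecomposable representations} identifies isomorphism classes of indecomposable representations with equivalence classes of pairs $(H, \chi)$, where $H \subseteq G$ is a subgroup and $\chi: H \to K^\times$ is a character, modulo conjugation. My first step is to argue that under the hypotheses of the present proposition, the character $\chi$ must be trivial, so that the equivalence classes collapse to conjugacy classes of subgroups.

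For the torsion case with $K$ idempotent: any $h \in H$ has finite order $n$, so $\chi(h)^n = \chi(h^n) = 1$ in $K^\times$. By Lemma \ref{lemma: torsionfree}, $K^\times$ is torsion-free, so $\chi(h) = 1$. Hence $\chi$ is trivial and the pair $(H, \chi)$ is determined by $H$ alone, with the equivalence relation from Proposition \ref{proposition: classification of indecomposable representations} reducing to conjugacy of subgroups. For the case $K = \mathbb{B}$ with no hypothesis on $G$: one simply observes that $\mathbb{B}^\times = \{1\}$ is trivial, so the only possible $\chi$ is again trivial and the same reduction applies. This proves part (1) in both settings.

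For part (2), I would combine part (1) with the orbit decomposition result proved just before Proposition \ref{proposition: classification of indecomposable representations}, which shows every representation decomposes uniquely (up to permutation of summands) as a direct sum of indecomposables, and that indecomposables correspond precisely to transitive $G$-sets via $V \mapsto (\text{set of basis lines})$. By elementary group theory, transitive $G$-sets are classified up to isomorphism by conjugacy classes of subgroups (the stabilizer of a point, well-defined up to conjugacy), and this is exactly the classification obtained in part (1). Thus, by sending a representation to its $G$-set of basis lines, isomorphism classes of representations match isomorphism classes of $G$-sets. Explicitly: an arbitrary $G$-set $S$ decomposes into orbits $S = \bigsqcup_i S_i$, each $S_i$ corresponds to an indecomposable representation $V_i$ (by part (1)), and $V = \bigoplus_i V_i$ is the representation whose basis line set is $S$; the inverse direction is the basis-line construction itself.

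The only genuinely new ingredient beyond what is already proved in the preceding propositions is the triviality of $\chi$, so the main obstacle (modest as it is) is invoking Lemma \ref{lemma: torsionfree} correctly. Care must be taken to state that the bijection in (2) depends on the choices of coset representatives implicit in Proposition \ref{proposition: classification of indecomposable representations}, but since the torsion/$\mathbb{B}$ hypothesis eliminates $\chi$, the resulting correspondence between isomorphism classes is canonical, and no further bookkeeping is required.
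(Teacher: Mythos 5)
Your proposal is correct and takes essentially the same route as the paper: part (1) is obtained by using Lemma \ref{lemma: torsionfree} (or the triviality of $\mathbb{B}^\times$) to kill the character $\chi$ in Proposition \ref{proposition: classification of indecomposable representations}, and part (2) follows by matching the orbit decomposition of the set of basis lines with the decomposition into indecomposable summands, noting that the classifying subgroup is the stabilizer of a basis line. No substantive differences from the paper's argument.
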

\begin{proof}
(1): If $G$ is torsion, then so is any subgroup $H$.  Since $K^\times$ is torsion-free, any homomorphism $\chi: H \rightarrow K^\times$ is trivial.  If $K = \mathbb{B}$, $K^\times$ is the trivial group, so $\chi$ is trivial without any assumption on $G$. Thus we may omit the homomorphism involved in the classification from the second part of Proposition \ref{proposition: classification of indecomposable representations}.  This immediately yields the first claim.

(2): From the proof of Proposition \ref{proposition: classification of indecomposable representations}, we see that the subgroup $H$ involved in the classification is constructed as the stabilizer of some basis line, and of course the conjugacy class of $H$ is independent of the basis line chosen.  In particular, it is determined entirely by the $G$-set of basis lines.  Combining with the first part of this proposition, we see that an indecomposable representation is determined by the $G$-set of basis lines.  So we get a one-to-one correspondence between isomorphism classes of indecomposable representations and isomorphism classes of transitive $G$-sets.

Decomposing into orbits yields a one-to-one correspondence between isomorphism classes of $G$-sets and collections of isomorphism classes of transitive $G$-sets.  Similarly isomorphism classes of representations correspond to collections of isomorphism classes of indecomposable representations.  So we get a one-to-one correspondence between isomorphism classes of representations and isomorphism classes of $G$-sets by decomposing a representation into indecomposables, taking the $G$-set of basis lines of each indecomposable representation, and then taking the disjoint union.  But the $G$-set of basis lines of a direct sum is the union of $G$-sets of basis lines of each term, so this construction is the same as simply taking the $G$-set of basis lines of the original representation.
\end{proof}

Elements of an indecomposable representation of a finite group have a particularly simple description. For a group $G$ and its subgroup $H$, we let $G/H$ be the $G$-set of left cosets. 

\begin{lem}\label{lemma: indecomposable representations are subrepresentations of the regular representation}
Let $V$ be an indecomposable representation of a finite group $G$ over an idempotent semifield $K$.  Let $H$ be a subgroup in the equivalence class corresponding to $V$ under Proposition~\ref{proposition: representations of finite groups}.  Pick a section $s: G/H \rightarrow G$ of the quotient map.  Then there is some $v\in V$ such that every element of $V$ can be represented uniquely as
\[
\sum_{g\in s(G / H)} a_g gv,
\]
for some $a_g \in K$. Moreover, every element can be represented uniquely as
\[
\sum_{g\in G} a_g gv,
\]
where $a_g$ is constant on left cosets (i.e. $a_{gh} = a_g$ for all $h\in H$).
\end{lem}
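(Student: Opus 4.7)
The plan is to reduce the lemma to the explicit model of indecomposable representations produced in the proof of Proposition~\ref{proposition: classification of indecomposable representations}, and then exploit two simplifications coming from the idempotence of $K$: the associated character is forced to be trivial, and a sum of finitely many copies of any element of $K$ collapses back to that element.

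First, I would invoke Propositions~\ref{proposition: representations of finite groups} and \ref{proposition: classification of indecomposable representations} to select $v \in V$ whose span is a basis line and whose stabilizer in $G$ equals $H$. Writing the associated character as $\chi: H \to K^\times$, we have $hv = \chi(h) v$ for every $h \in H$. Since $G$ (and hence $H$) is torsion, Lemma~\ref{lemma: torsionfree} forces $\chi$ to be trivial, so $hv = v$ for every $h \in H$. By the explicit model in Proposition~\ref{proposition: classification of indecomposable representations}, the family $\{gv : g \in S_H\}$ is a basis of $V$ for any system $S_H$ of left-coset representatives; taking $S_H = s(G/H)$ immediately gives the first claim together with uniqueness of the coefficients $a_g \in K$.

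For the ``moreover'' part, I would use the unique factorization $g' = gh$ with $g \in s(G/H)$ and $h \in H$: combined with $hv = v$ this gives $g'v = g(hv) = gv$. For any family $(a_{g'})_{g' \in G}$ constant on left cosets of $H$, grouping the sum $\sum_{g' \in G} a_{g'} g'v$ by coset then yields $\sum_{g \in s(G/H)} \bigl( \sum_{h \in H} a_g \bigr) gv$. Idempotence of $K$ (i.e.\ $1+1 = 1$, and inductively $n \cdot a = a$ for every $n \geq 1$ and $a \in K$) collapses the inner $|H|$-fold sum of $a_g$ to $a_g$ itself, reducing the expression to $\sum_{g \in s(G/H)} a_g gv$. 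Existence and uniqueness of the second expansion among families constant on left cosets therefore follow from existence and uniqueness of the first.

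The main ``obstacle'' is really only a bookkeeping point: one must read ``uniquely'' in the second form as uniqueness among families $(a_{g'})_{g' \in G}$ constrained to be constant on left cosets of $H$, in which case that normal form is just a reindexing of the first. The genuine mathematical content consists entirely in recognizing that both the character $\chi$ and the integer $|H|$ are trivialized by the idempotent structure of $K$, after which everything reduces to the uniqueness of basis expansions over the chosen coset representatives.
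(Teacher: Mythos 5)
Your proposal is correct and follows essentially the same route as the paper: choose $v$ spanning a basis line with stabilizer $H$ acting trivially (via torsion-freeness of $K^\times$), note that $\{gv \mid g \in s(G/H)\}$ is a basis, and pass between the two normal forms by grouping over cosets and collapsing the $|H|$-fold repeated sum using idempotence. The paper runs the idempotence computation in the other direction (starting from $x = \sum_{h\in H} x$), but this is the same argument, and your remark that uniqueness in the second form means uniqueness among coset-constant families matches the paper's reading.
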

\begin{proof}
By the proof of Proposition \ref{proposition: classification of indecomposable representations}, $H$ is the stabilizer of some basis line $\mathrm{span}(v)$.  Moreover, the proof of Proposition \ref{proposition: representations of finite groups} shows that $H$ acts trivially on $v$.  %\blue{TODO: these statements should be explicit instead of buried in the proofs} \orange{these are also used in the proof of 3.14}

Because $G$ acts transitively on the set of basis lines and $H$ stabilizes $\mathrm{span}(v)$, each basis line has the form $\mathrm{span}(gv)$ for a unique $g\in s(G/H)$.  Thus we obtain a basis for V of the form $\{ gv \mid g \in s(G/H) \}$.  

Now we can write any $x\in V$ as
\[ 
x = \sum_{g \in s(G / H)}a_g gv, 
\]
where the coefficients $a_g$ are defined for all $g \in s(G / H)$.  We extend the definition of $a_g$ to all $g \in G$ by defining $a_g$ to be constant on cosets.  We obtain
\[ 
x = \sum_{h \in H} x = \sum_{h \in H} \sum_{g \in s(G / H)} a_g gv = \sum_{h\in H, g \in s(G / H)} a_{gh} ghv = \sum_{g\in G} a_g gv.
\]
The first equality above is idempotence, the second is the definition of $a_g$, the third follows from $a_g$ being constant on cosets and $H$ acting trivially on $v$, and the fourth is because each element of $G$ can be written uniquely as the representative of its coset times some element of $H$.

That this representation is unique follows by using the same chain of equalities to write $\sum_{g\in G} a_g gv$ as $\sum_{g\in s(G/H)} a_g gv$ and using the uniqueness of this latter decomposition.
\end{proof}

This shows each indecomposable representation is a subrepresentation (but not a summand) of the regular representation, consisting of those elements $\sum a_g g$ for which $a_g$ is constant on cosets.  It is also a quotient of the regular representation.

\begin{lem}\label{lemma: indecomposable representations are quotients of the regular representation}
Let $V$ be an indecomposable representation of a finite group $G$ over an idempotent semifield $K$.  Let $K[G]$ be the regular representation.  Then there is a subgroup $H\subseteq G$ such that $V$ is isomorphic to the quotient of $K[G]$ in which $g$ is identified with $gh$ for all $g\in G$ and $h \in H$.
\end{lem}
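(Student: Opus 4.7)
The plan is to build an explicit $G$-equivariant surjection $\phi \colon K[G] \to V$ and verify that it descends to an isomorphism from the stated quotient. First I would take $H \subseteq G$ to be the subgroup produced by Lemma \ref{lemma: indecomposable representations are subrepresentations of the regular representation}, so that $H$ stabilizes some basis line $\mathrm{span}(v)$. Because $G$ is finite (hence torsion) and $K^\times$ is torsion-free by Lemma \ref{lemma: torsionfree}, the character $\chi \colon H \to K^\times$ appearing in Proposition \ref{proposition: classification of indecomposable representations} is trivial, so $hv = v$ for every $h \in H$.

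Next, I would define $\phi \colon K[G] \to V$ by the $K$-linear extension of $g \mapsto gv$ on the canonical basis $\{g : g \in G\}$ of $K[G]$. Since $G$ acts on $K[G]$ by left multiplication, one has $\phi(g'g) = g'gv = g'\phi(g)$, so $\phi$ is a homomorphism of $K[G]$-modules. For $h \in H$ we have $\phi(gh) = ghv = gv = \phi(g)$, hence $\phi$ factors through the quotient semimodule $Q := K[G]/\sim$, where $\sim$ is the congruence generated by $g \sim gh$ for all $g \in G$ and $h \in H$. Let $\bar\phi \colon Q \to V$ denote the induced map.

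Surjectivity of $\bar\phi$ is immediate: by Lemma \ref{lemma: indecomposable representations are subrepresentations of the regular representation}, $V$ is spanned by $\{g_i v : g_i \in s(G/H)\}$, and each such vector is in the image. For injectivity, fix a section $s \colon G/H \to G$. Any element of $K[G]$ can be written $x = \sum_{g \in G} c_g g$, and the congruence sends each $g$ to its chosen coset representative $s([g])$, so inside $Q$ we obtain the ``normal form''
\[
[x] = \sum_{g_i \in s(G/H)} a_i\, [g_i], \qquad a_i = \sum_{g \in g_i H} c_g.
\]
Applying $\bar\phi$ sends this to $\sum_i a_i g_i v$, and the uniqueness portion of Lemma \ref{lemma: indecomposable representations are subrepresentations of the regular representation} forces the tuple $(a_i)$ to be determined by the image. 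Consequently any two elements of $Q$ with the same image in $V$ share a normal form and hence coincide, proving injectivity.

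The main subtlety is not new algebra but rather semimodule-theoretic bookkeeping: here ``quotient'' means quotient by a congruence rather than by a submodule, so one must check both that $\phi$ respects the congruence (automatic from $hv = v$) and that the normal form reduction is legitimate in $Q$ (automatic from the definition of the congruence generated by a relation, since congruences are closed under scalar multiplication and addition). After this setup, the uniqueness in the preceding lemma closes the argument with no further input.
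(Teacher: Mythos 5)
Your proof is correct and follows essentially the same route as the paper's: both identify $H$ from the classification, form the quotient $K[G]/(g\sim gh)$, observe it is free (has a unique normal form) on $G/H$, and conclude by matching against the unique representation of elements of $V$ from Lemma \ref{lemma: indecomposable representations are subrepresentations of the regular representation}. You merely spell out the equivariant map $g\mapsto gv$ and the injectivity check that the paper leaves implicit.
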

\begin{proof}
Let $H$ be a subgroup in the equivalence class corresponding to $V$ under Proposition~\ref{proposition: representations of finite groups}. Let $W$ be the quotient of $K[G]$ by $g \sim gh$ for $g\in G$, $h\in H$.  Observe that the quotient of a free module by identifying some basis elements with other basis elements is free on the equivalence classes of the basis elements modulo $H$.  Thus $W$ is free with basis $G / H$.  Let $s: G / H \rightarrow G$ be a section of the quotient map.  Then every element of $W$ may be written uniquely as $\sum_{g\in S} a_g gH$.  The result now follows from Lemma \ref{lemma: indecomposable representations are subrepresentations of the regular representation}.
\end{proof}

We now turn to the problem of classifying homomorphisms between representations.
\begin{mydef}Let $G$ be a group and $K$ be a semifield.  Let $V$ and $W$ be representations of $G$ over $K$.  A \emph{homomorphism of representations} $\phi: V \rightarrow W$ is a module homomorphism such that $\phi(gv) = g\phi(v)$ for all $g\in G$ and $v \in V$.
\end{mydef}

If we write $V$ and $W$ as direct sums of indecomposable representations, then specifying a homomorphism $V\rightarrow W$ is the same as specifying a collection of homomorphisms consisting of one from each summand of $V$ to each summand of $W$.  Thus we may focus on classifying homomorphisms between indecomposable representations.  It is worth pointing out that Schur's lemma cannot be used for this task.  One issue is that indecomposable representations may not be irreducible in the idempotent setting.  Another issue is that a homomorphism with trivial kernel does not have to be injective.

The key insight for this classification is that if we view $V$ as a quotient of the regular representation and $W$ as a subrepresentation of the regular representation, then homomorphisms $V \rightarrow W$ are essentially endomorphisms of the regular representation that factor through the quotient $V$ and land in the subrepresentation $W$.  The next lemma shows more explicitly how to use the description of $V$ as a quotient.

\begin{lem}
Let $G$ be a finite group and let $K$ be an idempotent semifield.  Let $V$ be an indecomposable representation of $G$ over $K$ and let $W$ be any representation.  Let $H\subseteq G$ be a subgroup corresponding to $V$ under Proposition~\ref{proposition: representations of finite groups}.  Then there is a one-to-one correspondence between homomorphisms $V\rightarrow W$ and $H$-invariant elements of $W$.
\end{lem}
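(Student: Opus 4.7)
The plan is to use the quotient description from Lemma~\ref{lemma: indecomposable representations are quotients of the regular representation}, which realizes $V$ as $K[G]/\!\sim$ with $g \sim gh$ for all $g \in G$, $h \in H$. The quotient map $\pi:K[G] \to V$ is $G$-equivariant. Let $v = \pi(1) \in V$; this is precisely the distinguished basis vector appearing in Lemma~\ref{lemma: indecomposable representations are subrepresentations of the regular representation}, and it is $H$-invariant because $1 \sim h = h \cdot 1$ for every $h \in H$ (equivalently, because the character $\chi$ attached to $V$ by Proposition~\ref{proposition: classification of indecomposable representations} is trivial, since $K^\times$ is torsion-free by Lemma~\ref{lemma: torsionfree}).

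First I would define the forward map $\Phi:\mathrm{Hom}_G(V,W) \to W^H$ by $\Phi(\phi) = \phi(v)$. This lands in $W^H$ because for $h \in H$ we have $h\phi(v) = \phi(hv) = \phi(v)$. Next I would construct the inverse: given $w \in W^H$, let $\tilde{\phi}_w : K[G] \to W$ be the unique $K[G]$-linear map sending the basis element $1$ to $w$, so explicitly $\tilde{\phi}_w(\sum a_g g) = \sum a_g gw$. Because $w$ is $H$-invariant, $\tilde{\phi}_w(gh) = ghw = gw = \tilde{\phi}_w(g)$ for every $g \in G$, $h \in H$, so $\tilde{\phi}_w$ is constant on the equivalence classes defining $V$ and therefore descends to a $G$-equivariant map $\phi_w : V \to W$.

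Finally I would check that the two assignments are mutually inverse. In one direction, $\Phi(\phi_w) = \phi_w(v) = \phi_w(\pi(1)) = \tilde{\phi}_w(1) = w$. In the other direction, given $\phi \in \mathrm{Hom}_G(V,W)$, the maps $\phi$ and $\phi_{\phi(v)}$ are both $G$-equivariant and agree at $v$, hence agree at every $gv$ for $g \in G$; since Lemma~\ref{lemma: indecomposable representations are subrepresentations of the regular representation} gives a basis of $V$ of the form $\{gv : g \in s(G/H)\}$, the two homomorphisms agree on $V$ by $K$-linearity.

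I do not expect a real obstacle: the quotient presentation of $V$ essentially packages the classical Frobenius reciprocity adjunction in this setting (homomorphisms out of an induced representation correspond to invariants), and idempotence of $K$ enters only through Lemma~\ref{lemma: torsionfree} to ensure that $v$ is genuinely $H$-fixed rather than merely spanning an $H$-stable line. The only point requiring a little care is verifying that $\tilde{\phi}_w$ really descends to the quotient, but this is immediate from the identity $ghw = gw$ noted above.
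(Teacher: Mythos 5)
Your proposal is correct and follows essentially the same route as the paper: both realize $V$ as the quotient $K[G]/\!\sim$ with $g \sim gh$, and identify homomorphisms out of this quotient with $H$-invariant elements of $W$ via the universal properties of the quotient and of the rank-one free module $K[G]$. Your version merely makes the mutually inverse maps explicit where the paper invokes the universal properties abstractly, so there is nothing to add.
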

\begin{proof}
%Clearly the module congruence on $K[G]$ generated by $gh\sim g$ for all $g\in G$ and $h\in H$ is a representation congruence.  \blue{Perhaps it would be more clear to say that clearly $V\equiv K[G]/\sim$ where $\sim$ is the module congruence on $K[G]$ generated by $g\sim gh$ for $g\in G, h\in H$.}  
Clearly $V\equiv K[G]/\sim$, where $\sim$ is the module congruence on $K[G]$ generated by $g\sim gh$ for $g\in G, h\in H$. By the universal property of quotients, homomorphisms $V \rightarrow W$ correspond to homomorphisms $\phi: K[G] \rightarrow W$ satisfying $\phi(gh) = \phi(g)$.  This latter equation is equivalent to $\phi(h) = \phi(1)$ since $\phi$ preserves multiplication by $g^{-1}$.  The universal property of the regular representation (which is the free $K[G]$-module of rank $1$) says that homomorphisms $\phi: K[G]\rightarrow W$ are in one-to-one correspondence with elements $\phi(1)\in W$.  Thus homomorphisms satisfying $\phi(h) = \phi(1)$ (or equivalently $h\phi(1) = \phi(1)$) for all $h\in H$ correspond to $H$-invariant elements of $W$.
\end{proof}

We now combine this with the description of an indecomposable representation $W$ as a subrepresentation of the regular representation to obtain a classification of homomorphisms of indecomposable representations.

\begin{pro}\label{proposition: double cosets}
Let $V$ and $W$ be indecomposable representations of a finite group $G$ over an idempotent semifield $K$.  Let $H_V$ and $H_W$ be subgroups corresponding to $V$ and $W$ under Proposition~\ref{proposition: representations of finite groups}.  Then the set of homomorphisms $\phi: V\rightarrow W$ is in bijective correspondence with the set of all functions $H_V \backslash G / H_W \rightarrow K$.  Here $H_V \backslash G / H_W$ denotes the set of double cosets of the form $H_V g H_W$, where $g\in G$.
\end{pro}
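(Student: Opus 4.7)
The plan is to compose two bijections coming from the two lemmas immediately preceding this proposition. First, by the lemma just above, $\Hom(V, W)$ is in bijection with $W^{H_V}$, the set of $H_V$-invariant elements of $W$. Second, applying Lemma \ref{lemma: indecomposable representations are subrepresentations of the regular representation} to the indecomposable representation $W$ yields a distinguished vector $w \in W$ (fixed by $H_W$) such that every element of $W$ has a unique expansion $x = \sum_{g \in G} a_g\, gw$ with $a_g \in K$ constant on left cosets of $H_W$, i.e.\ $a_{gh} = a_g$ for all $h \in H_W$. Thus it suffices to characterize which such coefficient functions $a : G \to K$ correspond to $H_V$-invariant vectors.

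To carry out this characterization, I would apply $h' \in H_V$ to $x$ and reindex by $g \mapsto h'g$ to obtain
\[
h'\cdot x \;=\; \sum_{g \in G} a_{(h')^{-1} g}\, gw.
\]
Since left multiplication by $(h')^{-1}$ permutes the left $H_W$-cosets of $G$, the new coefficient function $g \mapsto a_{(h')^{-1} g}$ is still constant on left $H_W$-cosets, so the uniqueness of the normal form in Lemma \ref{lemma: indecomposable representations are subrepresentations of the regular representation} applies. Equating coefficients, $h' \cdot x = x$ is equivalent to $a_{h' g} = a_g$ for every $g \in G$; requiring this for all $h' \in H_V$ is precisely the condition that $a$ be constant on right $H_V$-cosets.

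Combining with the original condition that $a$ be constant on left $H_W$-cosets, the $H_V$-invariant elements of $W$ are in bijection with functions $a : G \to K$ constant on the double cosets $H_V g H_W$; conversely, any such function clearly produces a valid $H_V$-invariant vector. Composing with the first bijection produces the claimed bijection
\[
\Hom(V, W) \;\longleftrightarrow\; \{\, f : H_V \backslash G / H_W \to K \,\}.
\]

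The one point that requires care is the reindexing step: one must check that the transformed coefficient function $g \mapsto a_{(h')^{-1}g}$ still lies in the canonical form of Lemma \ref{lemma: indecomposable representations are subrepresentations of the regular representation} so that uniqueness may be invoked. This is immediate because left multiplication on $G$ preserves the partition into left $H_W$-cosets. Beyond this, the argument is direct bookkeeping in cosets.
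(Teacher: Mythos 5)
Your proposal is correct and follows essentially the same route as the paper: reduce to $H_V$-invariant elements of $W$ via the preceding lemma, expand in the canonical form $\sum_{g\in G} a_g\, gw$ from Lemma \ref{lemma: indecomposable representations are subrepresentations of the regular representation}, and use uniqueness of that form to translate invariance into bi-invariance of the coefficients on double cosets. The reindexing check you flag is exactly the observation the paper makes in passing ("the collection of coefficients $a_{h^{-1}g}$ is constant on cosets of $H_W$"), so there is no substantive difference.
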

\begin{proof}
Homomorphisms $V\rightarrow W$ correspond to $H_V$-invariant elements of $W$.  There is some $w\in W$ such that each element of $W$ can be written uniquely in the form $\sum\limits_{g\in G} a_g gw$ with $a_g = a_{gk}$ for each $k\in H_W$.  This element is $H_V$-invariant if and only if for all $h\in H_V$,
\[ 
\sum_{g\in G} a_g gw = \sum_{g\in G} a_g hgw = \sum_{g\in G} a_{h^{-1}g} gw. 
\]
Note that the collection of coefficients $a_{h^{-1}g}$ is constant on cosets of $H_W$.  And the above equation holds if and only if for all $h \in H_V$ and all $g \in G$, $a_g = a_{h^{-1}g}$, or equivalently if and only if $a_g = a_{hg}$ for all $h \in H_V$ and $g\in G$.

Thus $H_V$-invariant elements of $W$ have a unique description of the form $\sum\limits_{g\in G} a_g gw$ where the coefficients are right-invariant under $H_W$ and left invariant under $H_V$.  We may identify the set of homomorphisms $V\rightarrow W$ with the set of such elements, which we may in turn identify with maps $H_W \backslash G / H_V \rightarrow K$.
\end{proof}

%\begin{rmk}
%Proposition \ref{proposition: double cosets} implies that over $\mathbb{B}$, there is a unique map between indecomposable representations.     
%\end{rmk}

\section{Group actions on $\mathbb{B}$-modules}\label{section: B-modules}
A finitely generated $\mathbb{B}$-module is in fact finite, and furthermore a lattice.  One can define duals of such modules, which can either be viewed as an instance of lattice duality, or as an analogue of duality for vector spaces.  Before delving into the special case of $\mathbb{B}$-modules, we consider this situation in slightly more generality.

\begin{mydef}
Let $K$ be a semiring and $M$ be a $K$-module.  $M^\vee:=\Hom_K(M, K)$ is called the dual of $M$.  $M$ is called \emph{reflexive} if the canonical map $M \rightarrow M^{\vee\vee}=\Hom_K(M^\vee, K)$ is an isomorphism, and \emph{weakly reflexive} if this map is a monomorphism.
\end{mydef}

\begin{rmk}
Reflexive modules play an important role in the classical theory, but also it provides an interesting interpretation for certain classical objects. For instance, see \cite[Theorem 13.1]{borger2024facets} %\orange{could you give a more specific reference} 
for an application of reflexive modules over semifields to narrow class groups. 
\end{rmk}

Recall that a submodule $M$ of a $\mathbb{B}$-module $N$ is said to be \textit{subtractive} if $x,y \in N$ such that $x \leq y$ and $y \in M$, then $x \in M$, where $x\leq y$ if and only if $x+y=y$. %\blue{  This should be stated for submodules rather than modules - whether $M$ is subtractive depends on what it is a submodule of.}

\begin{lem}\label{lemma: dual lemma}
Let $M$ be a finitely generated $\mathbb{B}$-module.  Then there is an order reversing bijection $M\rightarrow M^\vee$.  Furthermore, $M$ is reflexive.
\end{lem}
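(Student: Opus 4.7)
The plan is to describe $M^{\vee}$ explicitly in terms of subtractive submodules of $M$, and then to identify the double dual via the canonical evaluation map. Since $\mathbb{B}$ is idempotent and $M$ is finitely generated, every element of $M$ is a finite sum of generators; in particular $M$ is finite, and the order on $M$ (defined by $x \leq y$ if $x+y=y$) makes it a join-semilattice with least element $0$ and greatest element $1_M$ equal to the sum of all generators. A $\mathbb{B}$-linear map $\phi : M \to \mathbb{B}$ is determined by its kernel $\phi^{-1}(0)$, and this kernel is precisely a subtractive submodule in the sense recalled before the lemma. Since $M$ is finite, any subtractive submodule $N \subseteq M$ has a greatest element $m = \sum_{x\in N} x$ and equals $\downarrow m := \{x \in M : x \leq m\}$, so subtractive submodules correspond bijectively with elements of $M$.

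Based on this I would define $\Phi : M \to M^{\vee}$ by $m \mapsto \phi_m$, where $\phi_m(x) = 0$ if $x \leq m$ and $\phi_m(x) = 1$ otherwise. The only nontrivial verification is additivity of $\phi_m$, which reduces to the tautology that $x+y \leq m$ if and only if both $x \leq m$ and $y \leq m$. Surjectivity and injectivity of $\Phi$ are immediate from the bijection between elements and subtractive submodules described above, and $\Phi$ is order reversing because $m \leq m'$ gives $\downarrow m \subseteq \downarrow m'$, which in the pointwise order on $M^{\vee}$ means $\phi_{m'} \leq \phi_m$.

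For the reflexivity claim, $M^{\vee}$ is itself finite (it is in bijection with $M$) and hence finitely generated, so the same construction yields an order-reversing bijection $\Phi' : M^{\vee} \to M^{\vee\vee}$. The composite $\Phi' \circ \Phi : M \to M^{\vee\vee}$ is then an order-preserving bijection, and it remains to identify it with the canonical evaluation map $\mathrm{ev} : m \mapsto (\phi \mapsto \phi(m))$. Computing kernels, the equation $\mathrm{ev}_m(\phi_{m'}) = \phi_{m'}(m) = 0$ is equivalent to $m \leq m'$, and (since $\Phi$ reverses order) to $\phi_{m'} \leq \phi_m$; hence $\ker(\mathrm{ev}_m) = \downarrow \phi_m$ in $M^{\vee}$, which by construction of $\Phi'$ forces $\mathrm{ev}_m = \Phi'(\phi_m) = (\Phi' \circ \Phi)(m)$. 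Since the canonical map is $\mathbb{B}$-linear and agrees with a bijection, it is an isomorphism of $\mathbb{B}$-modules, proving that $M$ is reflexive. The only step requiring care is tracking the two order reversals so that the composite comes out order preserving, as the canonical $\mathbb{B}$-linear map must be.
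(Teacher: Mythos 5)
Your proposal is correct and follows essentially the same route as the paper: identifying a homomorphism $M\to\mathbb{B}$ with its kernel, which is a principal down-set $\{y : y\leq m\}$, and thereby obtaining the order-reversing bijection $m\mapsto \phi_m$. Your reflexivity argument is a more careful rendering of the paper's appeal to lattice duality — in particular, your kernel computation showing that $\Phi'\circ\Phi$ coincides with the canonical evaluation map is exactly the point the paper leaves implicit, and it is the step that actually makes the bijection count as reflexivity.
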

\begin{proof}
%It follows from \cite[Proposition 2.3]{CCHomological} that 
One can easily check that an element of $\mathrm{Hom}(M, \mathbb{B})$ is determined by its kernel which is a subtractive submodule. %\footnote{In \cite{CCHomological}, the authors use a term ``hereditary'' instead of ``subtractive''.}\blue{  Do we need a citation here?  It makes the statement sound deeper than it really is; any element of $K$ maps to $0$ or $1$, and so we only need to specify which map to $0$.}  Since $M$ is finite, this kernel has the form 
\[
\{y\in M\mid y\leq x\}, 
\]
where $x$ is the sum of elements in the kernel.  In this way we obtain a bijection 
\[
\psi: M \rightarrow \Hom(M, \mathbb{B}), \quad m \mapsto \{y \in M \mid y \leq m\}
\]
which is not natural as the right side is a contravariant functor.

Now, observe $\Hom(M, \mathbb{B})$ is isomorphic to the dual lattice, since $\psi$ is an order reversing bijection.  Then the final claim is essentially that the dual of the dual lattice isomorphic to $M$.

%\blue{I think at least part of this is in one of the Connes-Consani papers; we should check and cite appropiately.  The key idea is that an element of $\mathrm{Hom}(M, \mathbb{B})$ is determined by its kernel which is a subtractive submodule.  Since $M$ is finite, this kernel has the form $\{y\in M\mid y\leq x\}$ where $x$ is the sum of elements in the kernel.  In this way we obtain a bijection $\psi: M \rightarrow \Hom(M, \mathbb{B})$ (which isn't natural as the right side is a contravariant functor).  Observe $\Hom(M, \mathbb{B})$ is isomorphic to the dual lattice, since $\psi$ is an order reversing bijection.  Then the final claim is essentially that the dual of the dual lattice is an isomorphism.   It's also easy to prove directly without using lattice theory.  For this, note that both $M$ and its double dual have the same cardinality, so it suffices to prove injectivity.  If $x$ and $y$ map to the same element of the double dual, this means that $f(x) = f(y)$ for all $f: M\rightarrow\mathbb{B}$.  By taking $f = \psi(z)$ for some $z$, we obtain $\psi(z)(x) = \psi(z)(y)$ which means that $x\leq z$ if and only if $y \leq z$.  Since this holds for all $z$, $x = y$, establishing the result.}
\end{proof}

\begin{rmk}
Note that it is also easy to prove Lemma \ref{lemma: dual lemma} directly without using lattice theory. For this, note that both $M$ and its double dual have the same cardinality, so it suffices to prove injectivity.  If $x$ and $y$ map to the same element of the double dual, this means that $f(x) = f(y)$ for all $f: M\rightarrow\mathbb{B}$.  By taking $f = \psi(z)$ for some $z$, we obtain $\psi(z)(x) = \psi(z)(y)$ which means that $x\leq z$ if and only if $y \leq z$.  Since this holds for all $z$, $x = y$, establishing the result.
\end{rmk}

%There are a few other situations where modules are reflexive. \orange{Not sure what we mean by the previous sentence.} Of course, this is a well-known result in the case of finitely generated modules over fields. We show that submodules of free modules are weakly reflexive. 

The above result relating $K$-modules and their duals is well-known, when $K$ is a field. Now we look closer into weakly reflexive modules. We show that submodules of free modules are weakly reflexive.

\begin{lem}
Let $K$ be a semiring, and let $M$ be a $K$-module.  Suppose there is a monomorphism from $M$ to a (possibly infinitely generated) free $K$-module.  Then $M$ is weakly reflexive.
\end{lem}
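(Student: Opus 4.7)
The plan is to prove injectivity of the canonical evaluation map $\mathrm{ev}:M\to M^{\vee\vee}$, $m\mapsto(f\mapsto f(m))$; this is what it means to be a monomorphism of $K$-modules, since in this category monomorphisms coincide with injections. The strategy is standard: exhibit enough elements of $M^\vee$ to separate points of $M$, using the hypothesized embedding into a free module together with the coordinate projections.

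Concretely, let $\iota:M\hookrightarrow F$ be the given monomorphism, where $F=\bigoplus_{i\in I}K$ is free on a basis $\{e_i\}_{i\in I}$. By the universal property of the free $K$-module, for each $i\in I$ there is a $K$-module homomorphism $\pi_i:F\to K$ picking out the $i$-th coordinate. Composing gives $f_i:=\pi_i\circ\iota\in M^\vee$ for every $i$. Now suppose $m_1,m_2\in M$ satisfy $\mathrm{ev}_{m_1}=\mathrm{ev}_{m_2}$; then in particular $f_i(m_1)=f_i(m_2)$ for every $i$, that is, $\pi_i(\iota(m_1))=\pi_i(\iota(m_2))$ for all $i$. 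Because an element of $F$ is uniquely determined by its tuple of coordinates (this is immediate from $F$ being free on the $e_i$'s, and holds over a semiring just as over a ring), we conclude $\iota(m_1)=\iota(m_2)$, and injectivity of $\iota$ gives $m_1=m_2$.

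I do not expect a serious obstacle here: the argument is essentially the classical one, and the only point to be vigilant about is that the construction of the coordinate projections $\pi_i$ uses only the universal property of free semimodules, not any cancellation or subtraction in $K$. If one prefers to avoid identifying ``monomorphism'' with ``injection'' in this category, the same argument shows directly that $\mathrm{ev}$ has trivial kernel pair, i.e.\ if $\mathrm{ev}\circ u=\mathrm{ev}\circ v$ for two $K$-module maps $u,v:N\rightrightarrows M$ then $u=v$, by applying the above pointwise.
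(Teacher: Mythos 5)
Your proof is correct and follows essentially the same route as the paper's: embed $M$ into a free module, use the coordinate projections to produce elements of $M^\vee$ separating any two distinct elements, and conclude that the canonical map $M\to M^{\vee\vee}$ is injective. The only cosmetic difference is that you argue contrapositively (assuming equal images under $\mathrm{ev}$) while the paper starts from a pair of distinct elements, which is the same argument.
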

\begin{proof}
Given a pair of distinct elements $x, y\in M$, we may view them as distinct vectors inside a free module, and hence some coordinates differs. %\orange{coordinates differ?}.  
By considering the projection map onto this coordinate, we obtain a homomorphism $\phi: M \rightarrow K$ such that $\phi(x) \neq \phi(y)$.  The canonical map $M\rightarrow M^{\vee\vee}$ sends $x$ to the map $\hat{x}$ given by $\hat{x}(\psi) = \psi(x)$ and similarly for $y$.  Clearly $\hat{x}(\phi) = \phi(x) \neq \hat{y}(\phi)$ so $\hat{x} \neq \hat{y}$, which establishes weak reflexivity.
\end{proof}

%Another situation where dualizability occurs is in the case of cones (viewed as $\mathbb{R}_{\geq 0}$-modules).

\begin{rmk}
Another situation where dualizability occurs is in the case of cones (viewed as $\mathbb{R}_{\geq 0}$-modules). For interesting discussion on this in relation to flat modules and projective modules in the semiring setting, see \cite{borger2024facets}.
\end{rmk}

%\begin{lem}
%Let $V$ be a real vector space and $M\subseteq V$ be a cone which we view as an $\mathbb{R}_{\geq 0}$-module.  If $M$ spans $V$ and contains no lines, then $M$ is reflexive.\blue{Are these hypotheses neccessary?}  If in addition, $M$ is a polyhedral cone, then $\Hom(M, \mathbb{R}_{\geq 0})$ is finitely generated.
%\end{lem}
%\begin{proof}
%\blue{TODO: prove this; the key idea should be to show that we can identify $\Hom(M, \mathbb{R}_{\geq 0})$ with the dual cone.} \textcolor{red}{This does not seem to be true. I think the correct statement is that the double dual of $M$ is isomorphic to the topological closure of $M$. When $M$ is finitely generated, I believe $M$ is closed, and hence it's fine.}
%\end{proof}

If a $K$-module $M$ is equipped with a left $G$-action, then $M^\vee$ carries a right $G$-action defined in the usual way.  Note that the double dual $M^{\vee\vee}$ then has a left $G$-action.  Moreover, the usual argument shows that the map $M \rightarrow M^{\vee\vee}$ preserves the action, so is a $K[G]$-module homomorphism.

\begin{lem}\label{lemma: swap sub and quotients}
%\blue{TODO: state a lemma saying duality swaps submodules and quotients}
Let $K$ be a semiring and $M$ be a module over $K$. If $L$ is a quotient module of $M$, then $L^\vee$ is a submodule of $M^\vee$.
%\begin{enumerate}
%\item 
%If $L$ is a quotient module of $M$, then $L^\vee$ is a submodule of $M^\vee$.
%    \item 
%If $N$ is a submodule of $M$, then $N^\vee$ is a quotient module of $M^\vee$. 
%\blue{I think the second part is false; we only need the first part though.}
%\end{enumerate}
\end{lem}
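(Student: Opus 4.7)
The plan is to use the quotient map itself to define the embedding of duals. Suppose $\pi : M \to L$ is the given quotient map, and define the pullback
\[
\pi^{\vee} : L^{\vee} \to M^{\vee}, \qquad \phi \mapsto \phi \circ \pi.
\]
The first routine step is to verify that $\pi^{\vee}$ is a $K$-module homomorphism; this follows directly from the pointwise $K$-module structure on $\Hom_K(-,K)$, since $(\phi + \psi)\circ \pi = \phi\circ\pi + \psi\circ\pi$ and $(a\phi)\circ\pi = a(\phi\circ\pi)$ for $a\in K$.

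The core of the argument is injectivity of $\pi^{\vee}$, which is where surjectivity of $\pi$ comes in. Suppose $\pi^{\vee}(\phi) = \pi^{\vee}(\psi)$, i.e.\ $\phi\circ\pi = \psi\circ\pi$. For any $\ell\in L$, pick $m\in M$ with $\pi(m) = \ell$ (possible because $\pi$ is surjective as a quotient map); then
\[
\phi(\ell) = \phi(\pi(m)) = \psi(\pi(m)) = \psi(\ell).
\]
Hence $\phi = \psi$, so $\pi^{\vee}$ is a monomorphism and identifies $L^{\vee}$ with a submodule of $M^{\vee}$.

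There is essentially no obstacle here. The one subtlety one might worry about, peculiar to the semiring setting, is whether a ``quotient module'' here means the cokernel of a map or an image under a surjection modulo a congruence; in either interpretation the structural map $\pi : M \to L$ is surjective as a map of underlying sets, which is the only property used. So the argument goes through without modification for semimodules.
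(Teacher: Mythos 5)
Your proof is correct and follows essentially the same route as the paper: both identify $L^\vee$ with its image in $M^\vee$ under precomposition with the quotient map $\pi$, the paper simply phrasing this as the identification of $\Hom(L,K)$ with the congruence-respecting elements of $\Hom(M,K)$ while you make the injectivity of $\pi^\vee$ explicit. No gap.
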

\begin{proof}
Let $L=M/\sim$ for some congruence relation $\sim$ on $M$. As in the case for rings, one has the following identification:
\[
\Hom(L,K) = \{ \phi \in \Hom(M,K) \mid \phi(a) = \phi(b) ~\forall a \sim b.\} 
\]
\end{proof}

In the classical theory of representations of finite groups over fields, any representation can be viewed as either a subrepresentation or a quotient representation of a direct sum of copies of the regular representation.  In the more general setting where $K$ is a semifield, writing a $K[G]$-module as a quotient of a sum of copies of the regular representation is still trivial -- it amounts to writing a $K[G]$-module in terms of generators and relations.  To embed a $K[G]$-module into a free $K[G]$-module, we will need to use the theory of dual modules.

\begin{pro}\label{proposition: sub proposition}
Let $K$ be a semiring. Let $G$ be a finite group and let $M$ be a $K[G]$-module.  Suppose $M$ is weakly reflexive as a $K$-module and $M^\vee$ is finitely generated.  Then $M$ is isomorphic to a submodule of $K[G]^n$ for some $n$.
\end{pro}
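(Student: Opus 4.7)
The plan is to mimic the classical construction: from each $K$-linear functional $\phi \in M^\vee$ produce a $K[G]$-linear map $\widehat{\phi}\colon M \to K[G]$ via a group-averaged expression, then bundle finitely many such maps together into the desired embedding into $K[G]^n$.

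First, for any $\phi \in M^\vee$ I would define
\[
\widehat{\phi}(m) \;=\; \sum_{g \in G} \phi(g^{-1} m)\, g \ \in\ K[G].
\]
This is manifestly $K$-linear, and a one-line index change $g \mapsto hg$ in the sum shows $\widehat{\phi}(hm) = h\,\widehat{\phi}(m)$ for $h \in G$, so $\widehat{\phi}$ is in fact a (left) $K[G]$-module homomorphism. Next I would choose a finite generating set $\phi_1, \ldots, \phi_n$ of $M^\vee$ (available by hypothesis) and assemble the components $\widehat{\phi_1}, \ldots, \widehat{\phi_n}$ into a single $K[G]$-linear map
\[
\Phi\colon M \longrightarrow K[G]^n, \qquad m \longmapsto \bigl(\widehat{\phi_1}(m), \ldots, \widehat{\phi_n}(m)\bigr).
\]

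The remaining content is injectivity. Suppose $\Phi(m) = \Phi(m')$. Reading off the coefficient of $g$ in each coordinate gives $\phi_i(g^{-1}m) = \phi_i(g^{-1}m')$ for every $i$ and every $g \in G$. The $K$-linear functionals $m \mapsto \phi_i(g^{-1}m)$, as $i$ and $g$ vary, generate $M^\vee$ as a $K$-module: if the $\phi_i$ already generate $M^\vee$ over $K$ this is immediate, and if finite generation is meant in the $K[G]$-module sense, then translating by the finite group $G$ converts a $K[G]$-generating set into a $K$-generating set. Therefore $\phi(m) = \phi(m')$ for every $\phi \in M^\vee$, which is to say $m$ and $m'$ have the same image under the evaluation map $M \to M^{\vee\vee}$. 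Weak reflexivity of $M$ now forces $m = m'$, so $\Phi$ is injective.

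The substantive step is just the averaging construction $\widehat{\phi}$ together with the check of $G$-equivariance; after that, injectivity falls out of weak reflexivity and the fact that a finite generating set of $M^\vee$ (up to $G$-translation) is a separating family for $M$. The main thing requiring care is the left-versus-right convention for the $G$-action on $M^\vee$ and on $K[G]^n$, and in particular the placement of $g^{-1}$ (rather than $g$) inside $\widehat{\phi}$, which is precisely what makes $\widehat{\phi}$ a \emph{left} $K[G]$-module map rather than merely a $K$-linear one.
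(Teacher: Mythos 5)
Your proof is correct, but it takes a different route from the paper's. The paper argues entirely by duality: since $M^\vee$ is finitely generated over $K$ and $G$ is finite, $M^\vee$ is a finitely generated $K[G^{\mathrm{op}}]$-module, hence a quotient of $K[G^{\mathrm{op}}]^n$; applying Lemma \ref{lemma: swap sub and quotients} (duals turn quotients into submodules) and weak reflexivity $M \hookrightarrow M^{\vee\vee}$ then yields the embedding, with the identification $\bigl(K[G^{\mathrm{op}}]^n\bigr)^\vee \cong K[G]^n$ left implicit. Your averaging map $\widehat{\phi}(m) = \sum_{g\in G}\phi(g^{-1}m)\,g$ is precisely the explicit form of that implicit identification --- it realizes the adjunction $\Hom_K(M,K)\cong \Hom_{K[G]}(M,K[G])$ available for finite $G$ --- so in effect you have unwound the paper's composite $M \to M^{\vee\vee} \hookrightarrow \bigl(K[G^{\mathrm{op}}]^n\bigr)^\vee$ into a single concrete formula. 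What your version buys is self-containment: you do not need Lemma \ref{lemma: swap sub and quotients}, and the $G$-equivariance check and the injectivity argument (separating family of functionals plus weak reflexivity, using that monomorphisms of $K$-modules are injective) are all visible on the page; note that when the $\phi_i$ generate $M^\vee$ over $K$ you only need the $g=e$ coefficient, so the discussion of $G$-translates is a safety net rather than a necessity. What the paper's version buys is brevity and consistency with the duality formalism it develops for the corollary that follows. Both are valid; yours makes explicit a step the paper glosses over.
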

\begin{proof}
%\blue{TODO: prove this.  The idea is that we write $\Hom(M, K)$ as a quotient of $K[G^{op}]^n$, and then we dualize to get a subobject.}
From Lemma \ref{lemma: swap sub and quotients}, it is enough to show that $M^\vee$ is a quotient of $K[G^{\text{op}}]^n$. But, since $M^\vee$ is finitely generated over $K$ and $G$ is finite, $M^\vee$ is also finitely generated over $K[G^{\text{op}}]$. In particular, $M^\vee$ is a quotient of $K[G^{\text{op}}]^n$ for some $n$. 
\end{proof}

\begin{cor}
Let $G$ be a finite group and $M$ be a finitely generated (hence finite) $\mathbb{B}[G]$-module.  Then $M$ is isomorphic to a submodule of $\mathbb{B}[G]^n$ for some $n$.
\end{cor}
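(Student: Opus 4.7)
The plan is to deduce this corollary directly from Proposition \ref{proposition: sub proposition} applied to $K = \mathbb{B}$, by verifying that the two hypotheses of that proposition hold automatically in the Boolean setting.

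First I would observe that since $G$ is finite, $\mathbb{B}[G]$ is itself finite as a $\mathbb{B}$-module, so any finitely generated $\mathbb{B}[G]$-module $M$ is also finitely generated (and hence finite) as a $\mathbb{B}$-module. By Lemma \ref{lemma: dual lemma}, any finitely generated $\mathbb{B}$-module is reflexive, hence in particular weakly reflexive. This verifies the first hypothesis of Proposition \ref{proposition: sub proposition}.

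Next I would verify that $M^\vee$ is finitely generated. Again by Lemma \ref{lemma: dual lemma}, the order-reversing bijection $M \to M^\vee$ shows that $M^\vee$ is finite as a set, and therefore finitely generated as a $\mathbb{B}$-module (every element generates itself, and there are only finitely many elements). This is the second hypothesis.

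With both hypotheses in place, Proposition \ref{proposition: sub proposition} immediately yields an embedding $M \hookrightarrow \mathbb{B}[G]^n$ for some $n$. I do not anticipate any obstacle here: the corollary is essentially just the observation that in the Boolean case the weak reflexivity and finite generation of $M^\vee$ required by Proposition \ref{proposition: sub proposition} come for free from Lemma \ref{lemma: dual lemma}. The only thing worth being careful about is that the finiteness of $M$ as a $\mathbb{B}$-module (rather than merely as a $\mathbb{B}[G]$-module) is what gives us access to Lemma \ref{lemma: dual lemma}, and this uses the finiteness of $G$.
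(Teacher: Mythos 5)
Your proposal is correct and follows essentially the same route as the paper: the paper's own proof simply inlines the argument of Proposition \ref{proposition: sub proposition} in the Boolean case (finiteness of $M$ gives reflexivity via Lemma \ref{lemma: dual lemma}, finiteness of $M^\vee$ gives a surjection from $\mathbb{B}[G^{\text{op}}]^n$, then dualize), whereas you invoke the proposition directly after checking its hypotheses. Both verifications you give are valid, so there is no gap.
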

\begin{proof}
%$\Hom(M, \mathbb{B})$ is finite, hence finitely generated, and we know $M$ is reflexive.  So this follows from Proposition \ref{proposition: sub proposition}.
As we note right before Lemma \ref{lemma: swap sub and quotients}, $\Hom(M,\mathbb{B})$ is naturally equipped with a right $G$-action, and hence it is a $\mathbb{B}[G^\text{op}]$-module. Likewise, $\Hom(\Hom(M,\mathbb{B}),\mathbb{B})$ is a $\mathbb{B}[G]$-module. 

Since $\Hom(M,\mathbb{B})$ is finite, it is a finitely generated $\mathbb{B}[G^\text{op}]$-module, which makes it a quotient of $\mathbb{B}[G^\text{op}]^n$. Therefore, by dualizing $\Hom(M,\mathbb{B})$ (as a $\mathbb{B}$-module), we get that $M$ is a $\mathbb{B}$-submodule of $\mathbb{B}[G]^n$ by Lemmas \ref{lemma: dual lemma} and \ref{lemma: swap sub and quotients}. So, we have an inclusion of $\mathbb{B}$-modules:
\[
\iota:M \to \mathbb{B}[G]^n.
\]
But since the inclusion map $\iota$ is obtained by dualizing a map of $\mathbb{B}[G^\text{op}]$-module, it preserves the left $G$-action. In particular, $\iota$ is a $\mathbb{B}[G]$-module map, showing that $M$ is isomorphic to a submodule of $\mathbb{B}[G]^n$.
%Now, one can check that the map $M \to \Hom(\Hom(M,\mathbb{B}),\mathbb{B})$ is a $\mathbb{B}[G]$-module isomorphism since it is as $\mathbb{B}$-module isomorphism which is $G$-equivariant. 
\end{proof}

%\begin{cor}\label{cor: corcor}
%Let $V$ be a representation of a finite group $G$ over $\mathbb{R}$.  Let $M\subseteq V$ be a cone which spans $V$, contains no lines, and is closed under the $G$-action.  Then for some $n$, there is a monomorphism of representations $V\rightarrow \mathbb{R}[G]^n$ such that every element of $M$ maps to a point with non-negative components.
%\end{cor}
%\begin{proof}
%Considering $M$ as an $\mathbb{R}_{\geq 0}[G]$-module, from Proposition \ref{proposition: sub proposition}, we have a monomorphism $\phi: M \to \mathbb{R}_{\geq 0}[G]^n$ for some $n$. Since $M$ spans $V$, we can extend $\phi$ to a monomorphism $\tilde{\phi}:V \to \mathbb{R}[G]^n$, which satisfies the desired property. 
%\end{proof}

%\begin{rmk}
%In Corollary \ref{cor: corcor}, it may be worth noting that the reason the last step gives a monomorphism is because $M$ is additively cancellative.  And the extension can be viewed as base change to $\mathbb{R}$ (or as the group completion).   
%\end{rmk}

In general $\mathbb{B}[G]$-modules can be quite complicated.  So it is natural to study special cases such as cyclic $\mathbb{B}[G]$-modules (i.e. those generated by a single element).  One observation in this vein is that the underlying $\mathbb{B}$-module must be ``quasi-free''. 

\begin{mydef}\label{definition: quasi-free}
Let $M$ be a module over a semiring $R$ and let $x_1,\ldots, x_n\in M$.
\begin{enumerate}
    \item 
Elements $x_1, \ldots, x_n$ are said to be \emph{quasi-independent} if an equation of the form $x_i = \sum_j c_j x_j$ implies $c_j = \delta_{ij}$.
    \item 
A \emph{quasi-basis} is a quasi-independent set of generators. 
\item 
$M$ is said to be \emph{quasi-free} of rank $n$ if it has a quasi-basis with $n$ elements. 
\end{enumerate}
Quasi-free will always mean quasi-free of finite rank unless otherwise specified.
\end{mydef}

\begin{lem}\label{lemma: anti-chain lemma}
Let $G$ be a torsion group.  Let $M$ be a finitely generated $\mathbb{B}[G]$-module.  Then each orbit of the $G$-action on the poset of join-irreducible elements forms an antichain. %\textcolor{red}{a torsion group is enough.}
\end{lem}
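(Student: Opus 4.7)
The plan is to exploit the fact that each $g \in G$ acts on $M$ by a $\mathbb{B}$-module automorphism, and that over $\mathbb{B}$ a module automorphism is precisely a join-semilattice automorphism. Combined with torsion, this forces any comparability within an orbit to collapse to equality.

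First I would observe that for each $g \in G$, the map $\rho(g) : M \to M$, $m \mapsto gm$, is a $\mathbb{B}$-module homomorphism, and is invertible with inverse $\rho(g^{-1})$. Since the addition on $M$ is the join and $x \leq y$ means $x+y=y$, the map $\rho(g)$ preserves $\leq$ in both directions; equivalently, it is a poset automorphism of $M$. It therefore permutes join-irreducible elements, so the action of $G$ on the set of join-irreducibles is well-defined.

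Next, fix a join-irreducible $x \in M$ and pick two elements $gx$ and $hx$ of its orbit with $gx \leq hx$. Applying the automorphism $\rho(h^{-1})$ gives $kx \leq x$, where $k := h^{-1}g \in G$. Applying $\rho(k^{-1})$ once more yields $x \leq k^{-1}x$. Iterating $\rho(k^{-1})$, which preserves $\leq$, produces an ascending chain
\[
x \;\leq\; k^{-1}x \;\leq\; k^{-2}x \;\leq\; \cdots \;\leq\; k^{-n}x.
\]
Here is where the torsion hypothesis enters: since $G$ is a torsion group, $k^{-n} = e$ for some $n \geq 1$, so the chain closes at $x$ itself. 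Therefore every inequality in it is an equality, and in particular $k^{-1}x = x$, whence $kx = x$ and $gx = hx$. Thus any two comparable elements of the orbit of $x$ are equal, which is exactly the statement that the orbit is an antichain.

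The argument is essentially a one-step computation once the right viewpoint is adopted, so there is no serious obstacle; the only mild subtlety is remembering that in the $\mathbb{B}$-module setting the $G$-action is automatically by lattice automorphisms (so that preserving $+$ forces preserving $\leq$), and that invertibility of $\rho(g)$ lets us move freely between $gx \leq hx$ and statements about $kx$ versus $x$. Finite generation of $M$ is not actually needed for this lemma; only the torsion hypothesis on $G$ is used to close the chain.
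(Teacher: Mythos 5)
Your proof is correct and follows essentially the same route as the paper's: reduce a comparability $gx \leq hx$ in the orbit to a relation between $x$ and a single translate, iterate the (order-preserving) action of that element until torsion closes the chain of inequalities back at $x$, and conclude equality throughout. Your added observations --- that the action is by poset automorphisms and that finite generation is not actually used --- are both accurate but do not change the argument.
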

\begin{proof}Let $v\in M$ be join-irreducible.  If $v\leq gv$ for some $g\in G$, choose $n$ such that $g^n = 1$.  By repeatedly applying $g$ to both sides of $v\leq gv$, we obtain
\begin{equation}
v \leq gv \leq g^2 v \leq \ldots \leq g^n v = v
\end{equation}
so $v = gv$.  

If $gv \leq hv$ for some $g, h\in G$, then $v\leq g^{-1}hv$.  By the above $v = g^{-1}hv$ so $gv = hv$.  Thus the orbit of $v$ is an antichain.
\end{proof}

\begin{pro}\label{prop:BGcyclicBqf}
Let $G$ be a finite group.  Let $M$ be a nonzero cyclic $\mathbb{B}[G]$-module.  Then $M$ is a quasi-free $\mathbb{B}$-module.
\end{pro}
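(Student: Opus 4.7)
The plan is to exhibit the orbit $Gm := \{gm : g \in G\}$ of a $\mathbb{B}[G]$-generator $m$ of $M$ as a quasi-basis of $M$ viewed as a $\mathbb{B}$-module. Since $G$ is finite, $Gm$ automatically spans $M$ over $\mathbb{B}$, so $M$ is a finite join-semilattice with least element $0$. Note that each $gm$ is nonzero, since otherwise $m = g^{-1}(gm) = 0$ would force $M = 0$, contrary to hypothesis.

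First I would verify that every nonzero join-irreducible element of $M$ lies in $Gm$. Any $v \in M$ has the form $v = \sum_{g \in S} gm$ for a finite $S \subseteq G$; if $|S| \geq 2$, splitting off a single term $g_0 \in S$ and applying join-irreducibility forces $v$ to equal either $g_0 m$ or $\sum_{g \in S \setminus \{g_0\}} gm$, and iterating reduces $S$ to a singleton, giving $v = gm$ for some $g \in G$.

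The key step, and the hard part of the argument, is bootstrapping from the previous observation to the conclusion that $m$ itself is join-irreducible. Since $M$ is a nonzero finite join-semilattice it contains some nonzero join-irreducible $v$, which by the previous step has the form $v = gm$. The action of $g^{-1}$ is a $\mathbb{B}$-module automorphism of $M$, hence an order automorphism, hence preserves join-irreducibility, so $m = g^{-1} v$ is join-irreducible. Lemma \ref{lemma: anti-chain lemma} applied to the $G$-orbit of this join-irreducible element then shows that $Gm$ is an antichain.

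Finally, I would conclude by observing that an antichain of distinct nonzero generators in a $\mathbb{B}$-module is automatically quasi-independent: any relation $x_i = \sum_j c_j x_j$ with $c_j \in \mathbb{B}$ expresses $x_i$ as the join $\bigvee_{c_j = 1} x_j$, and each summand then satisfies $x_j \leq x_i$; the antichain property together with distinctness and nonzeroness of the $x_j$ forces $c_j = \delta_{ij}$. Hence $Gm$ is a quasi-basis for $M$ over $\mathbb{B}$, so $M$ is quasi-free of rank $|Gm|$. The only substantial step in this plan is establishing join-irreducibility of $m$; everything else is either automatic from the cyclic structure or a direct appeal to Lemma \ref{lemma: anti-chain lemma}.
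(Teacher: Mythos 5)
Your proposal is correct, and its skeleton matches the paper's: exhibit the orbit of a $\mathbb{B}[G]$-generator as a quasi-basis, and get quasi-independence from the fact that a relation $x_i=\sum_j c_jx_j$ over $\mathbb{B}$ forces $x_j\leq x_i$ whenever $c_j\neq 0$, which the antichain property of the orbit then kills. The genuine difference is in how you justify that the orbit is an antichain. The paper never shows the generator $v$ is join-irreducible; instead it works with the cosets $G/H$ of the stabilizer $H$ and applies the torsion argument from the proof of Lemma~\ref{lemma: anti-chain lemma} directly to $v$ (the chain $v\leq gv\leq g^2v\leq\cdots\leq g^nv=v$ needs no irreducibility hypothesis), so its citation of that lemma is really a citation of the lemma's proof. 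You instead first establish that the generator $m$ is join-irreducible --- every nonzero join-irreducible of $M$ lies in $Gm$ by peeling off joins, some nonzero join-irreducible exists since $M$ is a nonzero finite join-semilattice, and the $G$-action consists of order automorphisms --- and then invoke Lemma~\ref{lemma: anti-chain lemma} exactly as stated. Your detour costs a few extra lines but buys something the paper does not record: the generator of a nonzero cyclic $\mathbb{B}[G]$-module is automatically join-irreducible, hence the quasi-basis you produce consists precisely of the atoms-like join-irreducibles in the orbit, which connects cleanly to the atomic-lattice characterization of quasi-freeness alluded to in the introduction. The paper's version is marginally more economical and also makes explicit that the quasi-basis is indexed by $G/H$; your identification of the orbit as a set of distinct elements carries the same information via orbit--stabilizer. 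All steps in your argument check out, including the final observation that a set of distinct nonzero elements forming an antichain is quasi-independent over $\mathbb{B}$.
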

\begin{proof}
Let $v\in M$ generate $M$ as a $\mathbb{B}[G]$-module.  Let $H\leq G$ be the stabilizer of $v$.  Since $gv=ghv$  for any $h \in H$, we may write it as $(gH)v$.  Clearly $M$ is generated as a $\mathbb{B}$-module by $\{ (gH)v \mid gH\in G/H \}$. Also, we remark that from Lemma \ref{lemma: anti-chain lemma} if $gv \leq v$ for some $g\in G$ then $g\in H$.  

%We claim that if $gv \leq v$ for some $g\in G$ then $g\in H$.  To see this we multiply by $g^k$ for some $k$ to get $g^{k+1}v \leq g^k v$.  Letting $n$ be the order of $g$, we then have \[ v = g^n v \leq g^{n - 1} v \leq \ldots \leq gv \leq v\]so $gv = v$ and hence $g\in H$, which establishes the claim.\blue{We can replace this last claim with the above lemma}

Suppose we have a relation of the form
\[ 
(gH)v = \sum_{kH\in G/H} a_{kH} (kH)v. 
\]
If $a_{kH} \neq 0$, we have $kv = (kH)v \leq gv$.  Multiplying by $g^{-1}$, we get $g^{-1}kv \leq v$.  By the above remark, we have that $g^{-1}k\in H$, so $gH = kH$.  We thus see that $a_{kH} = 0$ unless $kH = gH$, in which case $a_{gH} = 1$ (since some term on the right side must be nonzero).  This shows the generating set $\{ (gH)v \mid gH\in G/H \}$ is quasi-independent.
\end{proof}

The above result implies the existence of a $\mathbb{B}$-module $M$ such that $M$ cannot be equipped with the structure of a cyclic $\mathbb{B}[G]$-module for any finite group $G$. %\blue{I don't think $G$ needs to be nontrivial.  }
The following result extends this obstruction to $\mathbb{B}[G]$-modules with a bounded number of generators.

\begin{pro}
Let $M$ be a $\mathbb{B}[G]$-module generated by $n$ elements.  Let $P\subseteq M$ be the poset of join-irreducible elements (which depends only on the $\mathbb{B}$-module structure).  Then $P$ does not contain a chain of length greater than $n$.
\end{pro}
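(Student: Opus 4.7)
The plan is to show that join-irreducible elements of $M$ are severely restricted—each one must be of the form $gv_i$ for a generator $v_i$—and then rule out long chains among such elements by invoking Lemma \ref{lemma: anti-chain lemma}. (As in the preceding proposition, I would implicitly assume $G$ is torsion so that Lemma \ref{lemma: anti-chain lemma} applies.)

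First I would fix generators $v_1,\ldots,v_n$ of $M$ as a $\mathbb{B}[G]$-module and characterize the join-irreducibles. Since an arbitrary $x\in M$ can be written as a finite $\mathbb{B}[G]$-linear combination $x = \sum_{(g,i)\in S} g v_i$ with $S$ finite, and in $\mathbb{B}$-modules addition is join, join-irreducibility of $x$ together with an easy induction on $|S|$ forces $x = gv_i$ for some particular pair $(g,i)$. Thus every element of $P$ has the form $gv_i$.

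Next, suppose for contradiction $P$ contains a chain $x_1 < x_2 < \ldots < x_m$ with $m>n$. Write $x_j = g_j v_{i_j}$ using the characterization above. Applying the pigeonhole principle to the assignment $j \mapsto i_j \in \{1,\ldots,n\}$, I obtain indices $j<k$ with $i_j = i_k =: i$. The action of $g_j^{-1}$ is a $\mathbb{B}$-module automorphism, hence preserves the order and sends join-irreducibles to join-irreducibles; applying it to the inequality $g_j v_i < g_k v_i$ gives $v_i < (g_j^{-1}g_k) v_i$, and shows that $v_i = g_j^{-1} x_j$ is itself join-irreducible. But then $v_i$ and $g_j^{-1}g_k v_i$ are two comparable and distinct elements in the same $G$-orbit of a join-irreducible element, contradicting Lemma \ref{lemma: anti-chain lemma}.

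The main substantive step is the first one, identifying join-irreducibles as generator translates; once this is in hand, the rest is a clean pigeonhole plus the antichain property from Lemma \ref{lemma: anti-chain lemma}. I do not anticipate any real obstacle, but I would want to double-check the convention of ``length'' of a chain (number of elements versus number of covering relations) to be sure the bound is stated as ``length $\leq n$'' consistent with the earlier usage in the paper.
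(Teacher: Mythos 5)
Your proof is correct and follows essentially the same route as the paper: both arguments reduce to Lemma \ref{lemma: anti-chain lemma}, observing that a chain of join-irreducibles can meet each $G$-orbit at most once and that join-irreducibles must be translates $gv_i$ of the $n$ generators. You merely make explicit two steps the paper leaves implicit (the characterization of join-irreducibles and the count of relevant orbits), and phrase the orbit-counting as a pigeonhole argument; the torsion hypothesis on $G$ needed for Lemma \ref{lemma: anti-chain lemma} is equally implicit in both.
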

\begin{proof}
%\blue{TODO: prove this.  The key idea is the argument from the previous result, which shows that if $gv \leq kv$ for some $v\in P$ and $g,k\in G$ then $gv = kv$.  This implies that any chain in $P$ can intersect an orbit of $P$ in at most one point.  And so the length of a chain must be bounded by the number of orbits.}
From Lemma \ref{lemma: anti-chain lemma}, if $gv \leq kv$ for some $v \in P$ and $g,k \in G$, then $gv=kv$. This implies that any chain in $P$ can intersect an orbit of $P$ in at most one point. And so the length of a chain must be bounded by the number of orbits.
\end{proof}

\bibliography{Vectorbundle}\bibliographystyle{alpha}

\end{document}